\title{Smoothness criteria for Navier-Stokes equations in terms of regularity along the stream lines}
\author{Chi Hin Chan}
\date{}
\newtheoremstyle{modthm}% name
     {15pt}%      Space above
     {3ptpt}%      Space below
     {\itshape}%         Body font
     {}%         Indent amount (empty = no indent, \parindent = para indent)
     {\bfseries}% Thm head font
     {.}%        Punctuation after thm head
     {.5em}%     Space after thm head: " " = normal interword space;
\newtheoremstyle{modrem}% name
     {15pt}%      Space above
     {0pt}%      Space below
     {\rmfamily}%         Body font
     {}%         Indent amount (empty = no indent, \parindent = para indent)
     {\itshape}% Thm head font
     {.}%        Punctuation after thm head
     {.5em}%     Space after thm head: " " = normal interword space;
\theoremstyle{modthm}
\newtheorem{thm}{Theorem}
\newtheorem{prop}{Proposition}[section]
\newtheorem{lem}{Lemma}[section]
\theoremstyle{modrem}
\newcommand{\Div}{\mathrm{div}}
\newcommand{\R}{\mathbb{R}}
\begin{document}

\bibliographystyle{plain}
\maketitle
\begin{center}
Department of Mathematics \\
University of Texas at Austin
\end{center}
{ \small \noindent{\bf Abstract:} This article is devoted to a regularity criteria for solutions of the Navier-Stokes equations in terms of regularity along the stream lines. More precisely, we prove that if $u$ is a suitable weak solution for the Navier-Stokes equation on $[0,T]\times \mathbb{R}^{3}$ satisfying the condition that $\frac{|u\cdot \nabla F|}{|u|^{\gamma }} \leqslant A |F|$, in which 
$F= div(\frac{u}{|u|})$, $A$ is some given constant, and $\gamma$ is some positive number with $0 < \gamma < \frac{1}{3}$, then it follows that $u$ is smooth over $(0,T)\times \mathbb{R}^{3}$. }

\vskip0.2cm \noindent {\bf keywords:} Navier-Stokes, regularity
criterion, a priori estimates

\vskip0.2cm\noindent {\bf MSC:}  35B65, 76D03, 76D05

\section{Introduction}

In this article, we consider  the Navier-Stokes equation on
$\mathbb{R}^3$, given by
\begin{gather}\label{NS}
\partial_{t} u -\triangle u + \Div (u\otimes u) + \nabla p = 0 , \\
\Div (u) = 0, \label{incompressibility}
\end{gather}
where $u$ is a vector-valued function representing the velocity of
the fluid, and $p$ is the pressure. Note that the pressure depends
in a non local way on the velocity $u$. It can be seen as a Lagrange
multiplier associated to the incompressible condition
(\ref{incompressibility}). The  initial  value problem of the above
equation is endowed with the condition that $u(0, \cdot ) = u_{0}
\in L^2(\mathbb{R}^3)$. Leray~\cite{Leray} and Hopf~\cite{Hopf} had
already established the existence of global weak solutions for the
Navier-Stokes equation. In particular, Leray introduced a notion of
weak solutions for the Navier-Stokes equation, and  proved that, for
every given initial datum $u_{0} \in L^2(\mathbb{R}^3)$, there
exists a global weak solution $u \in L^{\infty}(0, \infty ;
L^2(\mathbb{R}^3)) \cap L^2(0, \infty ; \dot{H}^1(\mathbb{R}^3))$
verifying the Navier-Stokes equation in the sense of distribution.
From that time on, much effort has been devoted to establish the
global existence and uniqueness of smooth solutions to the
Navier-Stokes equation. Different Criteria for regularity of the
weak solutions have been proposed. The Prodi-Serrin conditions (see
Serrin \cite{Serrin}, Prodi \cite{Prodi}, and \cite{Struwe}) states
that any weak Leray-Hopf solution verifying $u\in
L^p(0,\infty;L^q(\R^3))$ with $2/p+3/q=1$, $2\leq p<\infty$, is
regular on $(0,\infty)\times\R^3$. The limit case of
$L^\infty(0,\infty; L^3(\R^3))$ has been solved very recently by L.
Escauriaza, G. Seregin, and V. Sverak (see \cite{Esca}). Here, we just mention a piece of work ~\cite{Chan} by Chi Hin Chan and Alexis Vasseur which is devoted to a log improvement of the Prodi-Serrin criteria in the case in which $p=q=5$. 
Other criterions have been later introduced, dealing with some derivatives
of the velocity. Beale, Kato and Majda \cite{Beale} showed the global
regularity under  the condition that the vorticity
$\omega=\mathrm{curl}\ u$ lies in $L^\infty(0,\infty;L^1(\R^3))$
(see Kozono and Taniuchi for improvement of this result
\cite{Kozono}). Beir\~ao da Veiga show in \cite{Veiga} that the
boundedness of $\nabla u$ in $L^p(0,\infty; L^q(\R^3))$ for
$2/p+3/q=2$, $1<p<\infty$ ensures the global regularity. In
\cite{Constantin}, Constantin and Fefferman gave a condition
involving only the direction of the vorticity. Until more recently, in a short paper~\cite{Vasseur2}, A. Vasseur gave another regularity criteria which states that any Leray-Hopf weak solution $u$ for the Navier-Stokes equation satisfying $div(\frac{u}{|u|}) \in L^{p}(0,\infty ;L^{q}(\mathbb{R}^{3}))$ with $\frac{2}{p}+ \frac{3}{q} \leqslant \frac{1}{2}$ is necessary smooth on $(0, \infty )\times \mathbb{R}^{3}$.
As we can see, the regularity criteria given in ~\cite{Vasseur2} is the one with some integrable condition imposed on $div(\frac{u}{|u|})$. However, the goal of this paper is to obtain the full regularity of a suitable weak solution $u$ under some suitable assumption about the smoothness of $div(\frac{u}{|u|})$ along the steam lines of the fluid. More precisely, the goal of this paper is to prove the following theorem

\begin{thm}\label{goal}
Let $u$ be a suitable weak solution for the Navier-Stokes equation on 
$(0,T]\times\mathbb{R}^{3}$ which satisfies the condition that 
$|\frac{u\cdot \nabla F }{|u|^{\gamma }}|\leqslant A|F|$, in which $A$ is some positive constant, and $\gamma $ is some positive constant for which
 $0< \gamma < \frac{1}{3}$. Then, it follows that $u$ is a smooth solution on 
$(0,T]\times \mathbb{R}^{3}$.
\end{thm}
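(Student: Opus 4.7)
The plan is to deduce Theorem \ref{goal} from the regularity criterion of A.~Vasseur in \cite{Vasseur2}, which guarantees smoothness of $u$ on $(0,T]\times \mathbb{R}^{3}$ as soon as $F:= \mathrm{div}(u/|u|)$ lies in $L^{p}(0,T;L^{q}(\mathbb{R}^{3}))$ for some pair $(p,q)$ with $2/p+3/q\leqslant 1/2$. The task is therefore to convert the pointwise streamline hypothesis $|u\cdot \nabla F|\leqslant A|u|^{\gamma}|F|$ into such a spatio-temporal integrability statement for $F$.

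The first step is to exploit $\mathrm{div}(u)=0$, which yields the algebraic identity $u\cdot \nabla |u| = -|u|^{2} F$. Parameterizing an integral curve of the instantaneous vector field $u(t,\cdot)$ by arclength $s$ on the open set $\{|u|>0\}$, the hypothesis together with this identity gives the coupled system
$$\left|\frac{dF}{ds}\right| \leqslant A|u|^{\gamma -1}|F|, \qquad \frac{d|u|}{ds} = -|u|F,$$
so that Gronwall's inequality provides exponential control of both $F$ and $|u|$ along each streamline in terms of $\int |u|^{\gamma -1}\,ds$ and $\int F\,ds$ respectively. These ODEs along streamlines are the key geometric input.

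The second step is to promote this streamline information into space-time integrability. Starting from the Leray-Hopf a priori bounds $u\in L^{\infty}(0,T;L^{2}) \cap L^{2}(0,T;\dot H^{1})$ and the Sobolev embedding $\dot H^{1}\hookrightarrow L^{6}$, one gets $u\in L^{10/3}((0,T)\times \mathbb{R}^{3})$. Splitting space-time into the sublevel set $\{|u|\leqslant \lambda\}$ and the superlevel set $\{|u|>\lambda\}$ and optimizing in $\lambda$, the subcritical factor $|u|^{\gamma}$ in the hypothesis (with $\gamma <1/3$) should provide just enough room to conclude that $F$ lies in some admissible Vasseur space $L^{p}_{t}L^{q}_{x}$ with $2/p + 3/q\leqslant 1/2$.

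The main obstacle is precisely this second step. Streamlines of $u(t,\cdot)$ at different times bear no a priori relation to one another, and the set $\{u=0\}$, on which $F$ is set to zero by the natural convention for $u/|u|$, must be excised carefully without losing control of the transverse measure. I expect the threshold $\gamma <1/3$ to enter sharply at this stage via a H\"older interpolation of the streamline bound against the Leray-Hopf energy, matching the exponent $1/2$ on the right of $2/p + 3/q\leqslant 1/2$. Once such integrability of $F$ has been established, Theorem \ref{goal} follows at once from the criterion in \cite{Vasseur2}.
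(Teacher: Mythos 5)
Your plan — convert the streamline hypothesis into a global space-time integrability bound $F\in L^{p}_{t}L^{q}_{x}$ with $2/p+3/q\leqslant 1/2$ and then quote \cite{Vasseur2} — does not work, and the first step is where it breaks. The hypothesis $|u\cdot\nabla F|\leqslant A|u|^{\gamma}|F|$ is a relation between the \emph{derivative} of $F$ along the flow and $F$ itself; along a streamline it reads $|dF/ds|\leqslant A|u|^{\gamma-1}|F|$, and Gronwall then bounds $|F(s)|$ by $|F(s_{0})|\exp\bigl(A\int_{s_{0}}^{s}|u|^{\gamma-1}\,ds'\bigr)$. This controls the \emph{ratio} of $F$ at two points of a streamline, not the size of $F$. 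There is no natural anchor point $s_{0}$ at which $|F(s_{0})|$ is known, and (worse) the exponential factor involves $\int|u|^{\gamma-1}\,ds$ with $\gamma-1<0$, which is not controlled by any Leray--Hopf quantity and in fact degenerates near $\{u=0\}$. Indeed, the hypothesis is satisfied vacuously whenever $F$ is slowly varying along streamlines, even if $F$ is pointwise enormous, so by itself it cannot yield any Lebesgue bound on $F$. The sublevel/superlevel splitting in your second step inherits the same defect: the hypothesis never bounds $F$, only $u\cdot\nabla F$.

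The paper's route is structurally different and avoids this obstacle entirely. It never attempts to place $F$ in a Prodi--Serrin-type space and never invokes the criterion of \cite{Vasseur2}. Instead it runs a De~Giorgi-style level-set iteration (Propositions~\ref{bounded} and~\ref{index}), decomposing the pressure with a second family of cutoffs $w_{k}=\{|u|-R^{\beta}(1-2^{-k})\}_{+}$ and controlling the resulting $P_{k1}$ term via the John--Nirenberg lemma (Lemma~\ref{BMO}) together with a carefully engineered divergence identity, namely
$\Div\{|u|^{\lambda-1}u\,\psi(F)\log(1+|F|)\,\phi_{k}(|u|)\}$. Upon integration over $Q_{k-1}$ this identity converts the troublesome weighted integrals $\int_{Q_{k-1}}|u|^{\lambda}|F|\log(1+|F|)\chi_{\{v_{k}\geqslant 0\}}$ into terms involving exactly $u\cdot\nabla F$, which is where the hypothesis $|u\cdot\nabla F|\leqslant A|u|^{\gamma}|F|$ is finally used, and the constraint $\gamma<1/3$ enters through the exponent bookkeeping $(\tfrac{10}{3}-2\beta-\gamma)\to(\tfrac13-\gamma)>0$ as $\beta\to\tfrac32^{+}$, not through any Gronwall/interpolation argument. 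If you want to rescue your outline, the missing idea is precisely this: the streamline hypothesis must be fed into an integration-by-parts identity against a divergence-free field, so that the transport derivative $u\cdot\nabla F$ appears as an exact term, rather than trying to integrate the streamline ODE directly.
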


As for Theorem~\ref{goal}, we note that $F = div(\frac{u}{|u|})$ can be rewritten as $F = -\frac{u\cdot \nabla |u|}{|u|^{2}}$, and hence is the derivative of $|u|$ along the streamlines of the fluid. Then, the condition appearing in the hypothesis of Theorem~\ref{goal} can be seen as a constraint on the second 
derivative along the streamlines. Theorem~\ref{goal} itself shows that such a constraint on the second derivative along the streamlines is enough to give the full regularity of the solution.\\

Before we proceed any further, let us say something about the term suitable weak solution. The concept of suitable weak solutions for Navier-Stokes equations was first introduced by Caffarelli, Kohn, and Nirenberg in ~\cite{Caff} for the purpose of developing the partial regularity theory for solutions of Navier-Stokes equations. By a suitable weak solution for the Navier-Stokes equations, we mean a Leray-Hopf weak solution 
$u\in L^{\infty}(0,T; L^{2}(\mathbb{R}^{3}))\cap L^{2}(0,T; \dot{H}^{1}(\mathbb{R}^{3}))$ which satisfies the following inequality in the sense of distribution on $(0,T)\times \mathbb{R}^{3}$.

\begin{equation*}
\partial_{t} (\frac{|u|^{2}}{2}) + div(\frac{|u|^{2}}{2} u) + div(P u) + |\nabla u|^{2} -\triangle (\frac{|u|^{2}}{2}) \leqslant 0 .
\end{equation*}

 Here, we decide to work with suitable weak solutions instead of just Leray-Hopf weak solutions because suitable weak solutions enjoy some very nice properties such as the partial regularity Theorem due to Caffarelli, Kohn, and Nirenberg in their joint work~\cite{Caff}. Now, let us turn our attention back to Theorem~\ref{goal}.
Indeed the conclusion for Theorem~\ref{goal} will follow at once provided if we can prove the following proposition.

\begin{prop}\label{bounded}
Let $u$ be a suitable weak solution for the Navier-Stokes equation on $(0,1]\times
\mathbb{R}^{3}$ which satisfies the condition that $|\frac{u\cdot \nabla F}{|u|^{\gamma }}|\leqslant A|F|$, where $A$ is some positive constant, and $\gamma $ is some positive number satisfying $0< \gamma < \frac{1}{3}$. It then follows that $u$ is essentially bounded over the region $[\frac{3}{4} , 1]\times \mathbb{R}^{3}$. That is, we have $\|u\|_{L^{\infty}([\frac{3}{4}, 1]\times\mathbb{R}^{3})} < \infty $.
\end{prop}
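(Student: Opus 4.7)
The plan is to follow the De Giorgi truncation strategy developed in Vasseur~\cite{Vasseur2} and Chan--Vasseur~\cite{Chan}, combined with an iteration along streamlines that exploits the hypothesis $|u\cdot\nabla F|\le A|F||u|^{\gamma}$.

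First, I would set up a dyadic sequence of truncations. Fix $\lambda>0$ large, let $T_k=\lambda(1-2^{-k})$, and let $\phi_k$ be a decreasing sequence of smooth time-cutoffs with $\phi_k\equiv 1$ on $[\tfrac12+2^{-k-2},1]$ (a spatial cutoff is unnecessary since $|u|^2/2$ is globally integrable in $x$ at a.e.\ time by Leray's energy bound). Set $v_k=\bigl(\tfrac{|u|^2}{2}-T_k\bigr)_+$. From the suitable-weak-solution inequality we obtain, in the sense of distributions on $(0,1]\times\R^3$,
\begin{equation*}
\partial_t v_k + \Div(v_k\, u) + \Div(p\, u)\, \chi_{\{v_k>0\}} + |\nabla u|^2\, \chi_{\{v_k>0\}} - \Delta v_k \le 0.
\end{equation*}
Testing against $\phi_k$ and standard manipulations yield control of the level-$k$ energy
\begin{equation*}
E_k := \sup_t \int \phi_k\, v_k^2\, dx + \iint \phi_k\, |\nabla v_k|^2\, dx\, dt
\end{equation*}
in terms of lower-level quantities together with the pressure contribution $\iint \phi_k\, u\, p\, \chi_{\{v_k>0\}}\, dx\, dt$ (arising after integration by parts on $\Div(pu)$).

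The main difficulty is this pressure term. Using $-\Delta p=\partial_i\partial_j(u_iu_j)$, I would decompose $p=p^{(1)}+p^{(2)}$, where $p^{(1)}$ is generated by $(u\otimes u)\chi_{\{v_{k-1}>0\}}$ and $p^{(2)}$ by the complement. By Calder\'on--Zygmund, $p^{(1)}$ admits an $L^{3/2}$ bound controlled by a power of $E_{k-1}$, which handles its contribution to the iteration. The $p^{(2)}$ term is where the streamline hypothesis enters: writing $F=-u\cdot\nabla|u|/|u|^2$, the assumption translates to
\begin{equation*}
|\partial_s F|\le A\, |F|\, |u|^{\gamma-1}
\end{equation*}
along each streamline, where $\partial_s$ denotes the arc-length derivative along $u/|u|$. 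A Gronwall integration along streamlines entering the cutoff region, combined with the fact that $F=\Div(u/|u|)$ is itself a divergence (allowing a further integration by parts against $u\, p^{(2)}$), converts the streamline information on $F$ into an integrable bound on the remaining pressure contribution.

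The outcome is a recursive inequality of De Giorgi type
\begin{equation*}
E_k \le C\, 2^{\beta k}\, E_{k-1}^{1+\alpha}
\end{equation*}
for some $\alpha>0$. A Sobolev interpolation between the $L^\infty_tL^2_x\cap L^2_t\dot H^1_x$ information encoded in $E_k$ and the streamline bound with weight $|u|^{\gamma}$ shows that $\alpha$ is positive precisely when $\gamma<\tfrac13$; this is exactly where the hypothesis on $\gamma$ is used. After first rescaling so that $E_0$ is sufficiently small relative to $\lambda$, the standard De Giorgi argument gives $E_k\to 0$, whence $|u|^2/2\le \lambda$ a.e.\ on $[\tfrac34,1]\times\R^3$, yielding the claimed $L^\infty$ bound. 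The main obstacle I anticipate is precisely the $p^{(2)}$ estimate: because the hypothesis controls $F$ only along streamlines and with the extra weight $|u|^{\gamma}$, the interpolation used to absorb this weight into the gain from $\int|\nabla v_k|^2$ is delicate, and it is this step that pins down the threshold $\gamma<1/3$.
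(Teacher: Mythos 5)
Your high-level plan is recognizably the right one (De Giorgi iteration on truncations of $|u|$, pressure decomposition into ``high'' and ``low'' velocity contributions, with the streamline hypothesis used to tame the low-velocity pressure, and a final rescaling argument), but two of your key steps are where the paper has to work hard, and your proposal either diverges from what actually works or waves past the difficulty.

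First, the pressure decomposition. You split $p$ using the \emph{same} truncation level as the energy truncation (your $p^{(1)}$ is generated by $(u\otimes u)\chi_{\{v_{k-1}>0\}}$). The paper explicitly identifies this as insufficient: the low-velocity block $(u\otimes u)\chi_{\{v_{k-1}\le 0\}}$ is only bounded by $R^2$ in $L^\infty$, and after pairing against $\nabla(\tfrac{v_k}{|u|})\,u = -R(1-2^{-k})F\chi_{\{v_k\ge 0\}}$ one accumulates positive powers of $R$ that the Chebyshev gain $\|\chi_{\{v_k\ge 0\}}\|_{L^q}\lesssim R^{-10/3q}U_{k-1}^{5/3q}$ cannot beat once the exponent bookkeeping is done honestly. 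The paper's fix is to decompose the pressure with a \emph{different} sequence of cutting functions $w_k=\{|u|-R^{\beta}(1-2^{-k})\}_+$ at a much higher threshold $R^{\beta}$ with $\beta>\tfrac{3}{2}$. This yields $\|\chi_{\{w_k\ge 0\}}\|_{L^q}\lesssim R^{-10\beta/3q}U_{k-1}^{5/3q}$, and the extra decay in $R$ is exactly what makes the high-velocity pressure terms (the paper's $P_{k2},P_{k3}$) summable. Tracking the exponents then forces $\beta>\frac{6-3p}{10-8p}$, which tends to $\tfrac{3}{2}$ as $p\to 1^+$, and it is the competition between $\beta\searrow\tfrac{3}{2}$ (needed for the high-velocity part) and the $R^{2\beta}$ that shows up in the low-velocity part that pins down $\gamma<\tfrac{1}{3}$. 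You should not expect the threshold on $\gamma$ to fall out of a Sobolev interpolation on $E_k$ alone; it comes from this $R$-exponent balance.

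Second, the low-velocity pressure term. Your proposal invokes ``Gronwall integration along streamlines.'' This is problematic: the hypothesis $|u\cdot\nabla F|\le A|F||u|^{\gamma}$ is a pointwise differential inequality for $F$ along integral curves of $u$, but for a suitable weak solution those integral curves are not available, and even formally Gronwall requires control of $F$ at some starting point of each streamline, which the hypothesis does not supply. What the paper actually does is Eulerian and global: it writes $P_{k1}$ (the pressure generated by $(1-\tfrac{w_k}{|u|})u_i(1-\tfrac{w_k}{|u|})u_j$, which is bounded by $R^{2\beta}$) as a Calder\'on--Zygmund image of an $L^\infty$ function, so $P_{k1}\in BMO$ with norm $\lesssim R^{2\beta}$, and then uses the John--Nirenberg exponential integrability (Lemma~\ref{BMO}) to reduce the pairing $\int |F|\chi_{\{v_k\ge0\}}|P_{k1}-(P_{k1})_B|$ to controlling weighted quantities $\int_{Q_{k-1}}|u|^{\theta}|F|\log^{+}|F|\chi_{\{v_k\ge 0\}}$ and $\int_{Q_{k-1}}|u|^{\theta}|F|\chi_{\{v_k\ge 0\}}$. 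The streamline hypothesis then enters through a carefully chosen \emph{divergence identity}: expanding $\mathrm{div}\{|u|^{\lambda-1}u\,\psi(F)\log(1+|F|)\phi_k(|u|)\}$ (whose spacetime integral vanishes), every term except the ones you want to estimate is bounded pointwise by $A|u|^{\lambda-1+\gamma}$ using the hypothesis, and the sign structure $F\psi(F)\ge 0$ makes the target terms nonnegative. This is the mechanism that converts the streamline hypothesis into the required $L^1$ bound, and it is genuinely different from a Gronwall argument. If you attempt your version, the step you flag yourself as ``delicate'' is exactly where a new idea -- the paper's BMO/John--Nirenberg reduction plus the divergence identity -- is needed, and I do not see how your sketch closes it.

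One smaller remark: the paper truncates $|u|$ rather than $|u|^2/2$, and the quantity $d_k^2$ is built so that $|\nabla v_k|\le d_k$ and $|\nabla(\tfrac{v_k}{|u|}u)|\le 3d_k$, both of which are used repeatedly; your version with $v_k=(\tfrac{|u|^2}{2}-T_k)_+$ is not wrong, but the bookkeeping does not match the paper's and you would need to rederive the analogous pointwise inequalities. Your observation that no spatial cutoff is needed because the solution is globally $L^2$ in space is correct and matches the paper (the iteration is carried out over $Q_k=[T_k,1]\times\R^3$), as is your use of rescaling to propagate the bound from $[\tfrac34,1]$ to any $[\tau,1]$ -- that part of your argument reproduces the paper's reduction from Theorem~\ref{goal} to Proposition~\ref{bounded}.
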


Before we devote our effort to prove proposition~\ref{bounded}, let us first explain why proposition~\ref{bounded} will lead to the conclusion of Theorem~\ref{goal} as follows. Assume that proposition~\ref{bounded} is indeed true. Without the loss of generality, let us assume that $u$ is a suitable weak solution for the Navier-Stokes equation on $(0,1]\times \mathbb{R}^{3}$  satisfying the hypothesis of Theorem~\ref{goal} (we note that if our suitable weak solution $u$ is over $(0,T]\times \mathbb{R}^{3}$, with $T$ to be some positive number other than $1$, we can always rescale our weak solution $u$). Now, proposition~\ref{bounded} automatically tells us that $u$ is essentially bounded on the region $[\frac{3}{4}, 1]\times \mathbb{R}^{3}$. So, over such a region, we can apply the Serrin criterion with $p=q= \infty $ to conclude that $u$ is smooth over $(\frac{3}{4} ,1)\times \mathbb{R}^{3}$. So, the only question remains is how to justify that $u$ is also smooth over $(0, \frac{7}{8})\times \mathbb{R}^{3}$. So, to finish our job, let $\tau \in (0, \frac{7}{8})$ be arbritary chosen and fixed, and let us consider the function 
$u_{\lambda }(t,x) = \lambda u(\lambda^{2}t, \lambda x )$, with 
$\lambda = (\frac{8\tau }{7})^{\frac{1}{2}}$. Notice that $u_{\lambda}$ is then another suitable weak solution on $(0, 1]\times \mathbb{R}^{3}$, which satisfies the same hypothesis of Theorem~\ref{goal}(with a different constant $A_{\lambda}$, of course). So, we can invoke proposition~\ref{bounded} again to conclude that $u_{\lambda }$ is essentially bounded over 
$[\frac{3}{4},1]\times \mathbb{R}^{3}$. However, this means the same thing as saying that our original suitable weak solution $u$ is essentially bounded over the region 
$[\frac{6\tau }{7} , \frac{8\tau }{7}]\times \mathbb{R}^{3}$, and hence $u$ must be smooth over the region $(\frac{6\tau }{7} , \frac{8\tau }{7})\times \mathbb{R}^{3}$. Since the number $\tau\in (0, \frac{7}{8})$ is arbritary chosen in the above argument, we conclude that $u$
must be smooth over $(0,1)\times \mathbb{R}^{3}$, provided that proposition~\ref{bounded} is valid. So, it is clear that the main task of the whole paper is to prove proposition~\ref{bounded}, which is what we will do in the following sections.

\section{Basic setting of the whole paper}

In order to prove proposition~\ref{bounded}, we would like to use the method of energy decompositions with respect to a sequence of cutting functions 
$v_{k} = \{|u|-R(1-\frac{1}{2^{k}})\}_{+} $ as introduced by A. Vasseur in ~\cite{Vasseur}. Indeed, A. Vasseur was the first to use such a method of energy decompositions inherted from De Giorgi ~\cite{De} to give a proof of the famous Partial Regularity Theorem of Caffarelli, Kohn and Nirenberg (see \cite{Vasseur}). So, we would like to introduce some notation first. Then, we will state one lemma and one proposition which are related to the proof
of proposition~\ref{bounded}. So, let us fix our notation as follow.\\

\begin{itemize}
\item for each $k \geqslant 0$, let $Q_{k} = [T_{k} , 1]\times \mathbb{R}^3$, in which
$T_{k} = \frac{3}{4} - \frac{1}{4^{k+1}}$.

\item for each $k \geqslant 0$, let $v_{k} = \{ |u| -R(1 - \frac{1}{2^k}) \}_{+}$.
\item for each $k \geqslant 0$, let $d_{k} = \frac{R( 1 - \frac{1}{2^k})}{|u|}
\chi_{\{|u| \geqslant R (1- \frac{1}{2^k})\}} |\nabla |u||^{2} + \frac{v_{k}}{|u|} |\nabla
u|^2$.
\item for each $k \geqslant 0$, let $U_{k} = \frac{1}{2}\|v_{k}\|^{2}_{L^{\infty}(T_{k} , 1 ;
L^{2}(\mathbb{R}^3))} + \int_{T_{k}}^{1}\int_{\mathbb{R}^{3}} d_{k}^2 dx\,dt$.
\end{itemize}
With the above setting, we are now ready to state the lemma and
proposition which are related to proposition~\ref{bounded} as follow.

\begin{prop}\label{index}
Let $u$ be a suitable weak solution for the Navier-Stokes equation on 
$[0,1]\times \mathbb{R}^3$ which satisfies the condition that
$|\frac{u\cdot \nabla F}{|u|^{\gamma }}|\leqslant A |F|$, where $A$ is some finite-positive constant, and $\gamma $ is some psoitive number satisfying 
$0 < \gamma < \frac{1}{3}$. Then, there exists some constant $C_{p,\beta }$, depending only on $1<p<\frac{5}{4}$, and $\beta > \frac{6-3p}{10-8p}$,and also some constants $0 < \alpha , K <\infty$, which do depend on our suitable weak solution $u$, such that the following inequality holds

\begin{equation*}
\begin{split}
U_{k}\leqslant \\
& C_{p,\beta }2^{\frac{10k}{3}}
\{\frac{1}{R^{\beta \frac{10-8p}{3p}-\frac{2-p}{p}}} \|u\|_{L^{\infty}(0,1;L^{2}(\mathbb{R}^{3}))}^{2(1-\frac{1}{p})}U_{k-1}^{\frac{5-p}{3p}} +\\
&(1+A)(1+\frac{1}{\alpha })
(1+K^{1-\frac{1}{p}})(1+\|u\|_{L^{\infty}(0,1;L^{2}(\mathbb{R}^{3}))})\times \\
&[ (\frac{1}{R^{\frac{10}{3}-2p\beta +1-\gamma -p }})^{\frac{1}{p}}U_{k-1}^{\frac{5}{3p}}
 + \frac{1}{R^{\frac{10}{3}-2\beta -\gamma }}U_{k-1}^{\frac{5}{3}}]\} ,
\end{split}
\end{equation*}

for every sufficiently large $R>1$.

\end{prop}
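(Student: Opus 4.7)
The plan is to run the De Giorgi / Vasseur energy cascade on the truncated quantities $v_k$, reserving the hypothesis on $u\cdot\nabla F$ exclusively for controlling the pressure contribution.

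First I would derive the localized energy inequality for $v_k^{2}$. Starting from the suitable-weak-solution inequality and testing (in a standard regularised sense) against the indicator of $\{|u|>R(1-2^{-k})\}$, then integrating over $[s,1]\times\mathbb{R}^3$, one obtains
\begin{equation*}
\tfrac{1}{2}\|v_k(\tau)\|_{L^2}^2 + \int_s^\tau\!\!\int_{\mathbb{R}^3} d_k^{\,2}\,dx\,dt \ \leq\ \tfrac{1}{2}\|v_k(s)\|_{L^2}^2 + \int_s^\tau\!\!\int |v_k|^{2}\, u\cdot\nabla\!\left(\tfrac{1}{|u|}\right) + \int_s^\tau\!\!\int P\,\tfrac{u}{|u|}\cdot\nabla v_k.
\end{equation*}
Averaging the initial time $s$ over $[T_{k-1},T_k]$ (whose length $\sim 4^{-(k+1)}$ is one source of the growth factor $2^{10k/3}$) and taking the supremum in $\tau\in[T_k,1]$ then controls $U_k$ by three integrals taken over $Q_{k-1}$.

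For the transport and initial-slice contributions I would apply the standard level-set interpolation of Vasseur. On $\{v_k>0\}$ one has $v_{k-1}\geq R\cdot 2^{-k}$, so any indicator of $\{v_k>0\}$ can be replaced by $(2^k v_{k-1}/R)^{s}$ for any $s\geq 0$; combined with the parabolic embedding $L^\infty_t L^2_x \cap L^2_t \dot H^1_x \hookrightarrow L^{10/3}_{t,x}$ and H\"older with exponent $p\in(1,5/4)$, this produces precisely the first term in the claimed recursion, the condition $\beta>(6-3p)/(10-8p)$ being exactly the threshold rendering the intermediate Young/Sobolev exponent admissible.

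The genuine difficulty is the pressure term, and this is where the hypothesis enters. I would represent $P$ via Calder\'on--Zygmund as $P=\sum_{ij}R_iR_j(u_i u_j)$ and then exploit that $F=-u\cdot\nabla|u|/|u|^{2}$ is the streamline derivative of $\log|u|$: integrating along the flow converts the factor $\tfrac{u}{|u|}\cdot\nabla v_k$ into an expression involving $u\cdot\nabla F$, which by assumption is pointwise bounded by $A|u|^\gamma |F|$. The extra weight $|u|^\gamma$ is absorbed by H\"older and is responsible for the $-\gamma$ appearing in both pressure exponents, while the powers $U_{k-1}^{5/(3p)}$ and $U_{k-1}^{5/3}$ come from a second parabolic Sobolev interpolation applied once with a spare exponent $p$ (producing the inhomogeneous term with $R^{-[10/3-2p\beta+1-\gamma-p]/p}$) and once without (giving the quadratic term with $R^{-(10/3-2\beta-\gamma)}$). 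The constants $\alpha$ and $K$ appear respectively as the slack needed to absorb the streamline integration by parts and as some trajectory-dependent quantity that is finite for a fixed suitable weak solution.

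Collecting the three contributions and bookkeeping the $2^k$ factors from time averaging, the indicator replacement $(2^k v_{k-1}/R)^{s}$, and the repeated interpolations — each bounded by $2^{10k/3}$ — yields the stated recursion. The pressure step is the main obstacle: no direct Lebesgue control on $F$ is assumed, so the gain has to be extracted purely via streamline integration by parts, and care is needed to keep the resulting boundary/flow terms inside the $U_{k-1}$-based closure rather than letting them spoil the recursion.
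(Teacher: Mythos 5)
Your high-level picture (De Giorgi cascade on the truncations $v_k$, with the streamline hypothesis reserved for the pressure term) is right, but the proposal is missing, or misassigns, several ideas that the paper's argument actually turns on.

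First, the paper does not estimate the pressure using the same truncations $v_k$. Its central device is a \emph{second}, coarser family of cutting functions $w_k=\{|u|-R^{\beta}(1-2^{-k})\}_+$ with $\beta>\tfrac{3}{2}$, used only to split $P=P_{k1}+P_{k2}+P_{k3}$ as in equations~(\ref{P1})--(\ref{P3}). The point is quantitative: $\|\chi_{\{w_k\geq 0\}}\|_{L^q(Q_{k-1})}$ decays like $R^{-10\beta/(3q)}$ rather than $R^{-10/(3q)}$, and this extra factor of $\beta$ in the exponent is exactly what makes $\beta\tfrac{10-8p}{3p}-\tfrac{2-p}{p}$ positive and the large-velocity pressure pieces $P_{k2}$, $P_{k3}$ summable. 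Your proposal attributes the first term of the recursion to transport and initial-slice contributions interpolated with $(2^k v_{k-1}/R)^s$; in the paper that term comes from $\|\nabla P_{k2}\|_{L^p}$, $\|\nabla P_{k3}\|_{L^p}$ via Lemma~\ref{cheb2}, and there is no way to reproduce the exponent $\beta\tfrac{10-8p}{3p}-\tfrac{2-p}{p}$ without the $R^{\beta}$-scale truncation of the pressure.

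Second, your description of where $\alpha$ and $K$ come from is off. They are not slack from a streamline integration by parts; they are the John--Nirenberg constants in the exponential BMO estimate on a fixed ball $B$. The paper observes $\nabla(\tfrac{v_k}{|u|})\cdot u=-R(1-2^{-k})F\chi_{\{v_k\geq 0\}}$, so the $P_{k1}$ pressure term is a weighted integral of $|F|\,|P_{k1}-(P_{k1})_B|$, uses the CKN partial-regularity theorem to pick a fixed ball $B$ containing the level sets $\{v_k\geq 0\}$ at later times, and then invokes Lemma~\ref{BMO} to trade $|P_{k1}-(P_{k1})_B|$ for an $L^1$-type norm of $\mu$ and an $\int\mu\log^+\mu$ term. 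Without this step there is no mechanism for converting the mean-zero BMO pressure $P_{k1}$ into something digestible.

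Third, "streamline integration by parts" is the right intuition for Step five, but the concrete device is the divergence identity for $\operatorname{div}\{|u|^{\lambda-1}u\,\psi(F)\log(1+|F|)\phi_k(|u|)\}$ in~(\ref{13}), together with the auxiliary smooth cutoff $\psi$ near $F=0$ and the ramp $\phi_k$. Integrating this identity over $Q_{k-1}$ and applying the hypothesis $|u\cdot\nabla F|/|u|^{\gamma}\leq A|F|$ is what removes the $\log^+|F|$ weight and produces the $-\gamma$ loss in the exponents $\tfrac{10}{3}-2p\beta+1-\gamma-p$ and $\tfrac{10}{3}-2\beta-\gamma$; your sketch does not make clear how the $|F|\log^+|F|$ integrand, once it appears, gets closed back into $U_{k-1}$. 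As it stands, the proposal does not contain a workable route to the stated inequality.
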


Here, let us make some important comments on the conclusion of proposition~\ref{index}. As indicated by the inequality which appears in the conclusion of proposition~\ref{index}, it is important for us to emphasis that those terms such as 
$R^{\beta \frac{10-8p}{3p} - \frac{2-p}{p}}$, $R^{\frac{10}{3}-2p\beta +1-\gamma -p}$, and $R^{\frac{10}{3}-2\beta -\gamma }$ should all appear in the denomerator. But unfortunately, the standard approach of carrying out decompositions on both the energy and pressure by using the same sequence of cutting functions 
$v_{k} = \{|u|-R(1-\frac{1}{2^{k}})\}_{+}$ is not powerful enough to ensure such a result as promised by proposition~\ref{index}. So, in proving proposition~\ref{index}, we will carry out the decomposition of the pressure $P$ by introducing another sequence of cutting functions $w_{k} = \{|u|- R^{\beta} (1-\frac{1}{2^{k}})\}_{+}$, for $k\geqslant 1$, where $\beta > \frac{3}{2}$ should be some suitable index sufficiently close to $\frac{3}{2}$ (for more detail, see inequalities 
(\ref{P1}), (\ref{P2}), and (\ref{P3}) ). We remark that the inequality $\|\chi_{\{ w_{k} \geqslant 0 \}}\|_{L^{q}(Q_{k-1})} \leqslant 
\frac{2^{\frac{10k}{3q}}}{R^{\beta \frac{10}{3q}}}C_{q}U_{k-1}^{\frac{5}{3q}}$, for 
$q\geqslant 1$ provides us with the term $\frac{1}{R^{\frac{10 \beta }{3q}}}$ which decays to $0$ in a way much faster than $\frac{1}{R^{\frac{10}{3q}}}$ as $R\rightarrow \infty$, and this is the reason why we use the cutting functions $w_{k}$ instead of $v_{k}$ in carrying out the decomposition of the pressure $P$. \\

Let us first show that   Proposition \ref{index}
provides the result of Proposition \ref{bounded}. First, we  show that
the sequence $\{U_k\}_{k\geqslant 1}$ converges to 0, when $k$ goes to infinity. We can
use for instance the following easy lemma (see \cite{Vasseur}):

\begin{lem}\label{Vass}
For any given constants $B$, $\beta > 1$, there exists some constant
$C^*_{0}$ such that for any sequence $\{a_{k}\}_{k\geqslant 1}$
satisfying $0 < a_{1} \leq C^*_{0}$ and $a_{k} \leqslant B^k
a^{\beta}_{k-1}$, for any $k \geqslant 1$, we have
$lim_{k\rightarrow \infty} a_{k} = 0$ .
\end{lem}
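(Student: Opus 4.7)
The plan is to prove by induction on $k$ that $a_k \leq C r^k$ for all $k \geq 1$, for suitable constants $0 < r < 1$ and $C > 0$ that depend only on $B$ and $\beta$. Since $r<1$, this ansatz immediately gives $a_k \to 0$ as $k \to \infty$. The role of the hypothesis $\beta > 1$ is that each application of $a_k \leq B^k a_{k-1}^\beta$ raises the previous term to a power strictly greater than one, producing super-geometric decay that beats the factor $B^k$ provided the initial value $a_1$ is small enough.

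To pin down $r$ and $C$, suppose inductively that $a_{k-1} \leq C r^{k-1}$; the recurrence yields $a_k \leq B^k C^\beta r^{\beta(k-1)}$, and I want this bounded by $C r^k$. Rearranging reduces the induction step to $(B r^{\beta-1})^k \leq r^\beta / C^{\beta-1}$. The guiding choice is to pick $r$ so that $B r^{\beta-1} \leq \tfrac{1}{2}$; concretely, set $r = (2B)^{-1/(\beta-1)}$, which can be taken strictly less than $1$. With this choice the left-hand side is at most $\tfrac{1}{2}$ for every $k \geq 1$, so the induction step closes as soon as $C^{\beta-1} \leq 2 r^\beta$, i.e.\ $C \leq (r/B)^{1/(\beta-1)}$.

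The base case $k=1$ then demands $a_1 \leq C r$, so I would define $C_0^* = (r/B)^{1/(\beta-1)} \cdot r$ with $r$ as above, and take $C = (r/B)^{1/(\beta-1)}$. Under the hypothesis $0 < a_1 \leq C_0^*$, both the base case and the inductive step are satisfied, so $a_k \leq C r^k$ for every $k \geq 1$, and hence $a_k \to 0$. The main obstacle here is purely bookkeeping: choosing $r$ before $C$ in the right order so the two conditions do not conflict. There is no analytic subtlety — the entire argument hinges on the fact that $\beta - 1 > 0$ lets us pick $r$ making $B r^{\beta-1} < 1$, which is precisely what turns the nominally growing factor $B^k$ into a geometrically decaying envelope.
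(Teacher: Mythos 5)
Your proof is correct, and the verification closes cleanly: with $r=(2B)^{-1/(\beta-1)}$ one has $Br^{\beta-1}=\tfrac12$, and with $C=(r/B)^{1/(\beta-1)}$ the quantity $(Br^{\beta-1})^k\,C^{\beta-1}r^{-\beta}$ equals $2^{1-k}\leqslant 1$, so the geometric envelope $a_k\leqslant Cr^k$ propagates. The paper itself does not prove Lemma~\ref{Vass} but simply cites Vasseur's article; your argument is the standard De Giorgi iteration proof (bound the sequence by a geometric one and let $\beta-1>0$ absorb the $B^k$ factor), which is precisely the argument one finds in that reference, so you have not taken a genuinely different route — you have supplied the expected proof.

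One small stylistic point: the lemma as written says the recurrence holds ``for any $k\geqslant 1$,'' but the sequence is indexed from $k=1$, so $a_0$ is undefined and the recurrence is really meant for $k\geqslant 2$; your induction starts at the base case $k=1$ and uses the recurrence only for $k\geqslant 2$, which is the correct reading, though it would be worth flagging this explicitly so a reader doesn't stumble on the off-by-one in the statement.
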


With the assistance of lemma~\ref{Vass}, we will derive the conclusion of proposition~\ref{bounded} from proposition~\ref{index} in the following way. Let $u$ be a suitable weak solution which satisfies the hypothesis of proposition~\ref{bounded}. Then, according to the conclusion of proposition~\ref{index}, we know that if the number $p$ with $1<p<\frac{5}{4}$ is chosen to be sufficiently close to $1$, and if the number $\beta > \frac{6-3p}{10-8p}$ is chosen to be sufficiently close to $\frac{3}{2}$, it follows that the sequence $\{U_{k}\}_{k=1}^{\infty }$ will satisfies the following inequality

\begin{equation}
U_{k} \leqslant \frac{D}{R^{\Phi (p,\beta , \gamma )}}2^{\frac{10k}{3}}
\{ U_{k-1}^{\frac{5-p}{3p}} + U_{k-1}^{\frac{5}{3p}} + U_{k-1}^{\frac{5}{3}}\} ,
\end{equation}

in which $D$ stands for some positive constant which depends on the choice of the suitable weak solution  $u$ but independent of $R$, and $\Phi (p, \beta , \gamma )$ is some positive index which depends only on $p$, $\beta$, and $\gamma$. Now,
let us apply Lemma~\ref{Vass} to deduce that there is some constant $C_{0}^{*}$
, such that for any sequence $\{a_{k}\}_{k=1}^{\infty }$ satisfying 
$0 < a_{1} \leqslant C_{0}^{*}$ and $a_{k}\leqslant 2^{\frac{10k}{3}}a_{k-1}^{\frac{5-p}{3p}}$for all $k\geqslant 1$, we have $lim_{k\rightarrow \infty}a_{k} = 0$. We then choose $R > 1$ to be sufficiently large, so that we have $\frac{3D}{R^{\Phi (p, \beta ,\gamma )}} < 1$, and that 
$U_{1} \leqslant min\{1, C_{0}^{*}\}$. With this suitable choice of $R$, we see that the sequence $\{U_{k}\}_{k=0}^{\infty }$ will satisfies the conditions that
$U_{1} \leqslant C_{0}^{*}$ and $U_{k} \leqslant 2^{\frac{10k}{3}}U_{k-1}^{\frac{5-p}{3p}}$, for all $k\geqslant 1$. Hence it follows that 
$lim_{k\rightarrow \infty}U_{k} = 0$. However, because for almost every $t\in [\frac{3}{4} , 1]$, we have 

\begin{equation*}
\int_{\mathbb{R}^{3}}|u(t,x)-R|^{2}dx \leqslant 2 lim_{k\rightarrow \infty }U_{k} = 0 .
\end{equation*}

It follows at once that $|u|\leqslant R$, almost everywhere over $[\frac{3}{4}, 1]\times \mathbb{R}^{3}$. This indicates that $u$ is essentially bounded over $[\frac{3}{4}, 1]\times \mathbb{R}^{3}$. Hence, we see that the conclusion of proposition~\ref{bounded} follows provided that proposition~\ref{index} is indeed valid.\\

 For this reason, the main task of this paper is to give a detailed proof of proposition~\ref{index}, which is what we will achieve in the following sections. More precisely, after we have given some preliminaries in section 3, we will actually carry out the proof of proposition~\ref{index} in section 4. Moreover, the proof of proposition~\ref{index} as presented in section 4 will be splitted into five successive steps. In step one, we will derive the inequality of the level set energy which gives an estimate of $U_{k}$ with respect to the pressure term
$\int_{T_{k-1}}^{1}|\int_{\mathbb{R}^{3}}\frac{v_{k}}{|u|}u\nabla P dx|ds$. In step two, we will decompose the pressure $P$ into $P = P_{k1}+P_{k2} + P_{k3}$ by using the cutting functions $w_{k} = \{|u|-R^{\beta }(1-\frac{1}{2^{k}})\}_{+}$, with $\beta > \frac{3}{2}$ to be some sutiable index sufficiently close to $\frac{3}{2}$ (for more detail see equations (\ref{P1}), (\ref{P2}), and (\ref{P3})). Here, we  remark that $P_{k2}$ and $P_{k3}$ represent the effect of large velocity values $|u|\chi_{\{|u|\geqslant R^{\beta}(1-\frac{1}{2^{k}})\}}$ on the pressure, while $P_{k1}$ represents the effect of those velocity values smaller than $R^{\beta}(1-\frac{1}{2^{k}})$ on the pressure. Step three is didicated to the control of the two pressure terms involving big velocity values. Thanks to the introduction of the cutting functions $w_{k} = \{|u|-R^{\beta }(1-\frac{1}{2^{k}})\}_{+}$ in the decomposition of the pressure, the control on these two terms  can then be performed successfully. In step four and step five, we will control the pressure term 
$\int_{T_{k-1}}^{1}|\int_{\mathbb{R}^{3}} \nabla (\frac{v_{k}}{|u|})uP_{k1}dx|ds$ which depends on those velocity values smaller than $R^{\beta }(1-\frac{1}{2^{k}})$. In step four, we will show that such a pressure term depending on those velocity values smaller than $R^{\beta }(1-\frac{1}{2^{k}})$ can be controlled by a weighted $|F|log^{+}|F|$ norm of $div(\frac{u}{|u|})$. We will finally show in step five that, in some specific way, we can eventually control the pressure term 
$\int_{T_{k-1}}^{1}|\int_{\mathbb{R}^{3}}\nabla (\frac{v_{k}}{|u|})u P_{k1}dx |ds$
successfully by employing the hypothesis 
$\frac{|u\cdot \nabla F|}{|u|^{\gamma}} \leqslant A |F|$ of proposition~\ref{index}.

\section{Preliminaries for the proof of proposition 2.1}

\begin{lem}
There exists some constant $C > 0$, such that for any $k\geqslant 1$, and any
$f\in L^{\infty} (T_{k} , 1 ; L^2(\mathbb{R}^3))$ with $\nabla f \in L^2(Q_{k})$, we have
$\|f\|_{L^{\frac{10}{3}}(Q_{k})}\leqslant C \|f\|_{L^{\infty}(T_{k} , 1 ; L^2(\mathbb{R}^3))}^{\frac{2}{5}} \|\nabla
f\|_{L^2(Q_{k})}^{\frac{3}{5}}$.
\end{lem}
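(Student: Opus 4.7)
The plan is to establish this inequality as a standard parabolic Gagliardo–Nirenberg bound, obtained by combining spatial Sobolev embedding, spatial $L^p$ interpolation, and integration in time. Nothing here is really deep; the only thing to watch is that the two exponents $\tfrac{2}{5}$ and $\tfrac{3}{5}$ come out of the scaling correctly.

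First, I would fix an admissible $t \in [T_k,1]$ and work in space only. Since we are on $\mathbb{R}^3$, the Sobolev embedding gives $\|f(t,\cdot)\|_{L^{6}(\mathbb{R}^{3})} \leqslant C\|\nabla f(t,\cdot)\|_{L^{2}(\mathbb{R}^{3})}$. Next, I would interpolate $L^{10/3}$ between $L^{2}$ and $L^{6}$: the identity $\tfrac{3}{10} = \tfrac{2/5}{2} + \tfrac{3/5}{6}$ shows that the correct interpolation exponent is $\theta = \tfrac{2}{5}$, yielding
\begin{equation*}
\|f(t,\cdot)\|_{L^{10/3}(\mathbb{R}^{3})} \leqslant \|f(t,\cdot)\|_{L^{2}(\mathbb{R}^{3})}^{2/5}\,\|f(t,\cdot)\|_{L^{6}(\mathbb{R}^{3})}^{3/5}.
\end{equation*}
Combining the two inequalities gives the pointwise-in-time bound $\|f(t,\cdot)\|_{L^{10/3}} \leqslant C\,\|f(t,\cdot)\|_{L^{2}}^{2/5}\,\|\nabla f(t,\cdot)\|_{L^{2}}^{3/5}$.

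Then I would raise this to the power $10/3$, bringing the spatial $L^{2}$ norm out as $\sup_{t}\|f(t,\cdot)\|_{L^{2}}^{4/3}$ and leaving $\|\nabla f(t,\cdot)\|_{L^{2}}^{2}$ under the time integral, so that
\begin{equation*}
\int_{T_{k}}^{1}\|f(t,\cdot)\|_{L^{10/3}(\mathbb{R}^{3})}^{10/3}\,dt \leqslant C\,\|f\|_{L^{\infty}(T_{k},1;L^{2}(\mathbb{R}^{3}))}^{4/3}\,\|\nabla f\|_{L^{2}(Q_{k})}^{2}.
\end{equation*}
Taking the $(3/10)$-th root of both sides gives $\|f\|_{L^{10/3}(Q_{k})}$ on the left and the advertised exponents $\tfrac{2}{5}$ and $\tfrac{3}{5}$ on the right, which is the claim.

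There is no real obstacle: the only points to be careful about are checking that the interpolation exponent really is $\tfrac{2}{5}$ (which amounts to solving a single linear equation for $\theta$) and that the constant $C$ can be chosen uniformly in $k$, which is automatic since the Sobolev and interpolation constants depend only on the dimension. The constraint $f \in L^{\infty}(T_{k},1;L^{2}(\mathbb{R}^{3}))$ with $\nabla f \in L^{2}(Q_{k})$ is exactly what is needed to make every norm on the right-hand side finite, and Fubini legitimizes the time integration of the pointwise-in-time estimate.
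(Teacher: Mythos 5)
Your proof is correct and follows essentially the same route as the paper: spatial Sobolev embedding into $L^6(\mathbb{R}^3)$, interpolation of $L^{10/3}$ between $L^2$ and $L^6$ (the paper phrases this as a direct Hölder estimate on $\int |f|^2 |f|^{4/3}\,dx$, which is the same inequality), then integration in time. The only cosmetic difference is whether one integrates the Sobolev bound in time before or after applying the interpolation, and this does not change the argument.
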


\begin{proof}
By Sobolev-embedding Theorem, there is a constant $C$, depending only on the dimension of $\mathbb{R}^3$, such that
\begin{equation*}
(\int_{\mathbb{R}^3} |f(t,x)|^6 dx)^{\frac{1}{6}} \leqslant C (\int_{\mathbb{R}^3}|\nabla f(t,x)|^2
dx)^{\frac{1}{2}} .
\end{equation*}

for any $t\in [T_{k} , 1]$, where $k\geqslant 1$, and $f$ is some function which verifies $f\in L^{\infty}(T_{k},1 ;
L^2(\mathbb{R}^3))$, and $\nabla f \in L^2(Q_{k})$. By taking power $2$ on both sides of the above inequality and
then taking integration along the variable $t\in [T_{k}, 1]$, we yield

\begin{equation*}
\int_{T_{k}}^{1} (\int_{\mathbb{R}^3} |f|^6 dx)^{\frac{1}{3}} dt \leqslant C^2
\int_{T_{k}}^{1}\int_{\mathbb{R}^3} |\nabla f|^2 dx\,dt .
\end{equation*}

On the other hand, by Holder's inequality, we have
\begin{equation*}
\begin{split}
\|f\|_{L^{\frac{10}{3}}(Q_{k})}^{\frac{10}{3}} =
\int_{T_{k}}^{1} \int_{\mathbb{R}^3} |f|^2 |f|^{\frac{4}{3}} dx\,dt\\
& \leqslant
\int_{T_{k}}^{1} (\int_{\mathbb{R}^3} |f|^6 dx)^{\frac{1}{3}}(\int_{\mathbb{R}^3}|f|^2 dx)^{\frac{2}{3}} dt\\
& \leqslant \|f\|_{L^{\infty}(T_{k} , 1 ; L^2(\mathbb{R}^3))}^{\frac{4}{3}} \|f\|_{L^2(T_{k}, 1 ;
L^6(\mathbb{R}^3))}^2.
\end{split}
\end{equation*}

By taking the advantage that $\|f\|_{L^2(T_{k} ,1 ; L^6(\mathbb{R}^3) )} \leqslant C \|\nabla f\|_{L^2(Q_{k})}$, we yield
\begin{equation*}
\|f\|_{L^{\frac{10}{3}}(Q_{k})}^{\frac{10}{3}}
\leqslant C^2 \|f\|_{L^{\infty}(T_{k} , 1 ; L^2(\mathbb{R}^3))}^{\frac{4}{3}} \|\nabla f\|_{L^2(Q_{k})}^2 .
\end{equation*}

Hence, we have

\begin{equation*}
\|f\|_{L^{\frac{10}{3}}(Q_{k})} \leqslant C \|f\|_{L^{\infty}(T_{k} , 1 ; L^2(\mathbb{R}^3))}^{\frac{2}{5}}
\|\nabla f\|_{L^2(Q_{k})}^{\frac{3}{5}} .
\end{equation*}
so, we are done
\end{proof}

\begin{lem}\label{cheb}
For any $1 < q < \infty$, we have $\|\chi_{\{ v_{k} \geqslant 0 \} }\|_{L^q(Q_{k-1})} \leqslant 
\frac{2^{\frac{10k}{3q}}}{R^{\frac{10}{3q}}}
C^{\frac{1}{q}}U_{k-1}^{\frac{5}{3q}}$ .
\end{lem}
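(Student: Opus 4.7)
The plan is to combine a Chebyshev-type pointwise majorization of the characteristic function $\chi_{\{v_k \geqslant 0\}}$ by a power of $v_{k-1}$, with the parabolic Sobolev embedding already established in the preceding lemma, and then to match the two resulting norms of $v_{k-1}$ against the two summands in the definition of $U_{k-1}$.

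\textbf{Step 1 (pointwise Chebyshev).} First I would observe that $v_k\geqslant 0$ forces $|u|\geqslant R(1-2^{-k})$, which in turn gives
\[
v_{k-1} \;=\; |u| - R(1-2^{-(k-1)}) \;\geqslant\; R(2^{-(k-1)} - 2^{-k}) \;=\; \frac{R}{2^{k}}
\]
on $\{v_k\geqslant 0\}$. Since $v_{k-1}\geqslant 0$ everywhere and $2^{k} v_{k-1}/R \geqslant 1$ on $\{v_k\geqslant 0\}$, the pointwise estimate
\[
\chi_{\{v_k\geqslant 0\}} \;\leqslant\; \left(\frac{2^{k} v_{k-1}}{R}\right)^{10/3}
\]
holds throughout $Q_{k-1}$. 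Raising to the $q$-th power and integrating over $Q_{k-1}$ reduces the task to bounding $\|v_{k-1}\|_{L^{10/3}(Q_{k-1})}^{10/3}$ by an appropriate power of $U_{k-1}$.

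\textbf{Step 2 (parabolic Sobolev and absorption).} For this last norm I would apply the preceding lemma with $f=v_{k-1}$ on $Q_{k-1}$, obtaining
\[
\|v_{k-1}\|_{L^{10/3}(Q_{k-1})}^{10/3} \;\leqslant\; C\,\|v_{k-1}\|_{L^{\infty}(T_{k-1},1;L^{2}(\mathbb{R}^{3}))}^{4/3}\,\|\nabla v_{k-1}\|_{L^{2}(Q_{k-1})}^{2}.
\]
The first factor is controlled directly by the first summand of $U_{k-1}$, giving $\|v_{k-1}\|_{L^{\infty}L^{2}}^{4/3}\leqslant (2U_{k-1})^{2/3}$. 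For the gradient factor I would use $\nabla v_{k-1}=\chi_{\{v_{k-1}\geqslant 0\}}\nabla|u|$ together with the Kato pointwise inequality $|\nabla|u||\leqslant|\nabla u|$ and the identity $|u|=R(1-2^{-(k-1)})+v_{k-1}$ valid on $\{v_{k-1}\geqslant 0\}$, so as to recognize $|\nabla v_{k-1}|^{2}$ as pointwise dominated by the integrand $d_{k-1}$ appearing in the second summand of $U_{k-1}$; this delivers $\|\nabla v_{k-1}\|_{L^{2}(Q_{k-1})}^{2}\leqslant U_{k-1}$ (up to an absolute constant).

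\textbf{Step 3 (assembly).} Combining Steps 1 and 2 yields
\[
\|\chi_{\{v_k\geqslant 0\}}\|_{L^{q}(Q_{k-1})}^{q} \;\leqslant\; \frac{2^{10k/3}}{R^{10/3}}\,C\,U_{k-1}^{5/3},
\]
since the exponents combine as $(2/3)+1=5/3$, and taking the $q$-th root then produces the stated inequality with a universal constant $C$. The whole argument is essentially bookkeeping; the only mildly non-routine point is the Kato-type comparison identifying $|\nabla v_{k-1}|^{2}$ with a piece of the dissipation integrand in $U_{k-1}$, and this is the step I expect to write most carefully. This Chebyshev--Sobolev pairing is the standard opening move for the De Giorgi iteration scheme that drives the rest of the paper.
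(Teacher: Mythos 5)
Your proposal is correct and follows essentially the same route as the paper: a Chebyshev comparison $\chi_{\{v_k\geqslant 0\}}\leqslant (2^k v_{k-1}/R)^{10/3}$, the parabolic Sobolev interpolation from the preceding lemma, and the absorption of $\|v_{k-1}\|_{L^\infty L^2}^{4/3}\|\nabla v_{k-1}\|_{L^2}^2$ into $U_{k-1}^{5/3}$ via the pointwise bound $|\nabla v_{k-1}|\leqslant d_{k-1}$. (One small slip: in Step 2 the integrand of the dissipation term in $U_{k-1}$ is $d_{k-1}^{2}$, not $d_{k-1}$, so the pointwise domination you want is $|\nabla v_{k-1}|^2\leqslant d_{k-1}^2$; and this bound is exact, so the parenthetical ``up to an absolute constant'' is unnecessary.)
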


\begin{proof}
First, we have to notice that $\{v_{k} \geqslant 0\}$ is a subset of $\{v_{k-1} \geqslant \frac{R}{2^k}\}$, hence we
have
\begin{equation*}
\int_{Q_{k-1}} \chi_{\{ v_{k} > 0\}} \leqslant \int_{Q_{k-1}} \chi_{\{v_{k-1} >  \frac{R}{2^k}\}} \leqslant
\frac{2^{\frac{10k}{3}}}{R^{\frac{10}{3}}} \int_{Q_{k-1}} |v_{k-1}|^{\frac{10}{3}} .
\end{equation*}

By our previous Lemma, we have

\begin{equation*}
\begin{split}
\|v_{k-1}\|_{L^{\frac{10}{3}}(Q_{k-1})}^{\frac{10}{3}} \leqslant \\
&C^2 \|v_{k-1}\|_{L^{\infty}(T_{k-1},1;L^2(\mathbb{R}^3))}^{\frac{4}{3}}
\|\nabla v_{k-1} \|_{L^2(Q_{k-1})}^{2} \\
& \leqslant C^2 (U_{k-1}^{\frac{1}{2}})^{\frac{4}{3}} \|d_{k-1}\|_{L^2(Q_{k-1})}^{2} \\
& \leqslant C^2 U_{k-1}^{\frac{2}{3}} U_{k-1} \\
& = C^2 U_{k-1}^{\frac{5}{3}} .
\end{split}
\end{equation*}

So, it follows that $\int_{Q_{k-1}} \chi_{\{v_{k} > 0\}} \leqslant 
\frac{ 2^{\frac{10k}{3}}}{R^{\frac{10}{3}}} C^2
U_{k-1}^{\frac{5}{3}}$, and hence we have
$\|\chi_{\{  v_{k} \geqslant 0 \}}\|_{L^q(Q_{k-1})} \leqslant 
\frac{ 2^{\frac{10k}{3q}}}{R^{\frac{10}{3q}}}  C^{\frac{1}{q}} U_{k-1}^{\frac{5}{3q}}$,
where C is some  universal constant. So, we are done.
\end{proof}

Just as we have said before, we will need to decompose the pressure by employing the sequence of cutting functions $w_{k} = \{|u| - R^{\beta }( 1-\frac{1}{2^{k}})\}_{+}$, for $k \geqslant 1$. We also said that we prefer to do this because the cutting functions $w_{k}$ satisfies the following inequality which can be justified in the same way as lemma~\ref{cheb}.\\

\begin{lem}\label{cheb2}
For every $q\geqslant 1$, we have $\|\chi_{\{w_{k}\geqslant 0 \}}\|_{L^{q}(Q_{k-1})}
\leqslant \frac{1}{R^{\frac{10\beta }{3q}}}2^{\frac{10k}{3q}}C_{q}U_{k-1}^{\frac{5}{3q}}$, for all $k\geqslant 1$, in which $C_{q}$ is some constant depending only on $q$.
\end{lem}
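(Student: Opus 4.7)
The plan is to mimic the proof of Lemma \ref{cheb} exactly, with the role of the level $R/2^k$ replaced by $R^\beta/2^k$. The only subtle point is that, although the cutting functions $w_k$ are defined with the high level $R^\beta$, we never actually need any $L^{10/3}$ estimate for $w_{k-1}$ directly; we will reduce to the already controlled function $v_{k-1}$ by a trivial pointwise inequality.

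First, I would establish the chain of inclusions. If $|u(t,x)| \geq R^\beta(1 - 1/2^k)$, then
\[
|u(t,x)| - R^\beta\bigl(1 - \tfrac{1}{2^{k-1}}\bigr) \geq R^\beta\bigl(\tfrac{1}{2^{k-1}} - \tfrac{1}{2^k}\bigr) = \frac{R^\beta}{2^k},
\]
so $\{w_k \geq 0\} \subset \{w_{k-1} \geq R^\beta/2^k\}$. Applying Chebyshev's inequality with exponent $10/3$ gives
\[
\int_{Q_{k-1}} \chi_{\{w_k \geq 0\}} \leq \int_{Q_{k-1}} \chi_{\{w_{k-1} \geq R^\beta/2^k\}} \leq \frac{2^{10k/3}}{R^{10\beta/3}} \int_{Q_{k-1}} |w_{k-1}|^{10/3}.
\]

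Next, since $R > 1$ and $\beta > 3/2$, we have $R^\beta \geq R$, so
\[
w_{k-1} = \{|u| - R^\beta(1 - \tfrac{1}{2^{k-1}})\}_+ \leq \{|u| - R(1 - \tfrac{1}{2^{k-1}})\}_+ = v_{k-1}
\]
pointwise. Hence $\|w_{k-1}\|_{L^{10/3}(Q_{k-1})}^{10/3} \leq \|v_{k-1}\|_{L^{10/3}(Q_{k-1})}^{10/3}$, and the computation already carried out in the proof of Lemma \ref{cheb} (using the Sobolev-type inequality from the previous lemma together with the definitions of $U_{k-1}$ and $d_{k-1}$) yields $\|v_{k-1}\|_{L^{10/3}(Q_{k-1})}^{10/3} \leq C^2 U_{k-1}^{5/3}$.

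Combining these bounds, I obtain
\[
\int_{Q_{k-1}} \chi_{\{w_k \geq 0\}} \leq \frac{2^{10k/3}}{R^{10\beta/3}} \, C^2 \, U_{k-1}^{5/3},
\]
and taking the $q$-th root delivers the claim with $C_q := C^{2/q}$. There is no real obstacle; the entire argument is mechanical once the inclusion $\{w_k \geq 0\} \subset \{w_{k-1} \geq R^\beta/2^k\}$ and the trivial pointwise comparison $w_{k-1} \leq v_{k-1}$ are observed. The only point worth flagging is that it is precisely the replacement $R \rightsquigarrow R^\beta$ in the Chebyshev step that produces the improved decay factor $R^{-10\beta/(3q)}$, which, as emphasized after the statement of Proposition \ref{index}, is the whole reason for introducing the auxiliary cutting sequence $w_k$ in the pressure decomposition.
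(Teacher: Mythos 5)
Your argument is correct and is exactly the realization of the paper's one-line justification, which simply says the lemma ``can be justified in the same way as Lemma \ref{cheb}'': the inclusion $\{w_k\geqslant 0\}\subset\{w_{k-1}\geqslant R^\beta/2^k\}$ and Chebyshev produce the improved $R^{-10\beta/(3q)}$ factor, and the pointwise comparison $w_{k-1}\leqslant v_{k-1}$ (valid since $R>1$ and $\beta>1$) reduces the remaining $L^{10/3}$ bound to the one already established for $v_{k-1}$ in the proof of Lemma \ref{cheb}. Your shortcut of reducing to $v_{k-1}$ pointwise is in fact slightly cleaner than reproving the interpolation step directly for $w_{k-1}$, since it avoids any appeal to the later comparison $D_{k}\leqslant 5^{1/2}d_{k}$ of Lemma \ref{D}, which is not available at this point in the paper and would impose an unnecessary largeness restriction on $R$.
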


Indeed, in dealing with the pressure terms, we will invoke the lemma~\ref{cheb2} without explicit mention.\\

In the proof of Lemma~\ref{cheb}, we have used the fact that $|\nabla v_{k}| \leqslant d_{k}$, whose
justification will be given immediately in the following paragraph.\\
Before we leave this section, we also want to list out some inequalities which will often be used in the proof
of proposition~\ref{index} as follow:

\begin{itemize}
\item $|(1-\frac{v_{k}}{|u|})u| \leqslant R( 1-\frac{1}{2^k})$.
\item $\frac{v_{k}}{|u|}|\nabla u| \leqslant d_{k}$.
\item $\chi_{\{v_{k} \geqslant 0\}}|\nabla |u|| \leqslant d_{k} $.
\item $|\nabla v_{k}|\leqslant d_{k}$.
\item $|\nabla (\frac{v_{k}}{|u|}u)| \leqslant 3d_{k}$.
\end{itemize}

Now, we first want to justify the validity of
$|(1-\frac{v_{k}}{|u|})u|\leqslant R( 1-\frac{1}{2^k})$. In the case in
which the point $(t,x)$ satisfies $|u(t,x)|< R( 1-\frac{1}{2^k})$, we
have $v_{k}(t,x) = 0$, and hence it follows that
\begin{equation*}
|\{1-\frac{v_{k}(t,x)}{|u(t,x)|}\}u(t,x)| = |u(t,x)| < R( 1-\frac{1}{2^k}) .
\end{equation*}
 In the case in which $(t,x)$ satisfies $|u(t,x)|\geqslant R( 1-\frac{1}{2^k})$, we have
 $v_{k}(t,x) = |u(t,x)| - R(1-\frac{1}{2^k})$, and hence it follows that

\begin{equation*}
|\{ 1 - \frac{v_{k}}{|u|}\} u(t,x)| = |1-\frac{|u|-R(1-\frac{1}{2^k})}{|u|}||u|=R( 1-\frac{1}{2^k}) .
\end{equation*}

So, no matter in which case, we always have the conclusion that $|(1-\frac{v_{k}}{|u|})u|\leqslant
 R( 1-\frac{1}{2^k})$.\\
Next, according to the definition of $d_{k}^2$, we can carry out the following estimation
\begin{equation*}
d_{k}^2 \geqslant \frac{v_{k}}{|u|} |\nabla u|^2 \geqslant \{\frac{v_{k}}{|u|} |\nabla u|\}^2 .
\end{equation*}
Hence, by taking square root, it follows at once that $d_{k} \geqslant \frac{v_{k}}{|u|} |\nabla u|$.\\
We now turn our attention to the inequality
$\chi_{\{ |u|\geqslant R(1-\frac{1}{2^k})\}   }|\nabla |u|| \leqslant d_{k}$. To justify
it, we recall that $|\nabla u|\geqslant |\nabla |u||$. Hence, it follows from the definition of $d_{k}^2$ that

\begin{equation*}
d_{k}^2 \geqslant \frac{R( 1-\frac{1}{2^k})}{|u|}\chi_{\{|u|\geqslant R( 1-\frac{1}{2^k})  \}} |\nabla |u||^2
+ \{1-\frac{R( 1-\frac{1}{2^k}) }{|u|}\}\chi_{\{|u|\geqslant  R( 1-\frac{1}{2^k})  \}}|\nabla |u||^2 .
\end{equation*}

So, by simplifying the right-hand side of the above inequality, we can deduce that
$d_{k}^2 \geqslant \chi_{\{|u|\geqslant R(1-\frac{1}{2^k})   \}} |\nabla|u||^2$. Hence, we have
$d_{k}\geqslant \chi_{\{|u|\geqslant R( 1-\frac{1}{2^k})  \}} |\nabla |u||$. In addition, since it is obvious to see
that $\nabla v_{k} = \chi_{\{|u|\geqslant R( 1-\frac{1}{2^k}) \}} \nabla |u|$, we also have the result that
$|\nabla v_{k}|\leqslant d_{k}$.\\
Finally, we want to justify the inequality that $|\nabla (\frac{v_{k}}{|u|}u)|\leqslant 3d_{k}$. So, we
notice that, by applying the product rule, we have

\begin{equation*}
\nabla (\frac{v_{k}}{|u|} u) = \nabla (v_{k})\frac{u}{|u|} + \frac{v_{k}}{|u|}\nabla u
- \frac{v_{k}}{|u|^2}u\nabla |u| .
\end{equation*}

However, since $\frac{v_{k}}{|u|}|\nabla u| \leqslant d_{k}$, and
$|\frac{v_{k}}{|u|^2}u\nabla |u|| \leqslant \chi_{\{|u|\geqslant R( 1-\frac{1}{2^k}) \}} |\nabla |u|| \leqslant
d_{k}$, it follows at once from the above expression that $|\nabla (\frac{v_{k}}{|u|}u)|\leqslant 3d_{k}$.

\section{proof of proposition 2.1}

\vskip0.2cm

\noindent{\bf Step one}

To begin the argument, we recall that, by multiplying the equation
$\partial_{t} u - \triangle u + div( u \otimes u) + \nabla P = 0$ by the term $\frac{v_{k}}{|u|}u$, we
yield the following inequality formally, which is indeed valid in the sense of distribution

\begin{equation*}
\partial_{t} (\frac{v_{k}^2}{2}) + d_{k}^{2} - \triangle (\frac{v_{k}^2}{2}) + div(\frac{v_{k}^2}{2} u) +
\frac{v_{k}}{|u|} u \nabla P \leqslant 0 .
\end{equation*}

Next, let us consider the variables $\sigma$ , $t$ verifying $T_{k-1} \leqslant \sigma \leqslant T_{k} \leqslant t
\leqslant 1$. Then, we have

\begin{itemize}
\item $\int_{\sigma}^{t} \int_{\mathbb{R}^3} \partial_{t} (\frac{v_{k}^2}{2}) dx\,ds =
\int_{\mathbb{R}^3} \frac{v_{k}^2(t,x)}{2} dx - \int_{\mathbb{R}^3} \frac{v_{k}^2(\sigma ,x)}{2} dx$.
\item $\int_{\sigma}^{t}\int_{\mathbb{R}^3} \triangle (\frac{v_{k}^2}{2}) dx\,ds = 0$.
\item $\int_{\sigma}^{t} \int_{\mathbb{R}^3} div (\frac{v_{k}^2}{2} u) dx\,ds = 0$.
\end{itemize}

So, it is straightforward to see that

\begin{equation*}
\int_{\mathbb{R}^3} \frac{v_{k}^2(t,x)}{2} dx + \int_{\sigma}^{t}\int_{\mathbb{R}^3}d_{k}^2dx\,ds
\leqslant \int_{\mathbb{R}^3} \frac{v_{k}^2(\sigma ,x)}{2}dx +
\int_{\sigma}^{t}    \vert \int_{\mathbb{R}^3}\frac{v_{k}}{|u|} u \nabla P dx \vert       ds ,
\end{equation*}

for any $\sigma$, $t$ satisfying $T_{k-1}\leqslant \sigma \leqslant T_{k} \leqslant t \leqslant 1$. By taking the
average over the variable $\sigma$, we yield

\begin{equation*}
\int_{\mathbb{R}^3} \frac{v_{k}^2(t,x)}{2} dx + \int_{T_{k}}^{t}\int_{\mathbb{R}^3}d_{k}^2 dx\,ds
\leqslant \frac{4^{k+1}}{6}   \int_{T_{k-1}}^{T_{k}}\int_{\mathbb{R}^3}v_{k}^2(s,x)dx\,ds
+ \int_{T_{k-1}}^{t}|\int_{\mathbb{R}^3}\frac{v_{k}}{|u|} u \nabla Pdx|ds .
\end{equation*}

By taking  the sup over $t\in [T_{k} , 1]$. the above inequality will give the following

\begin{equation*}
U_{k} \leqslant \frac{4^{k+1}}{6}   \int_{Q_{k-1}}v_{k}^2 + \int_{T_{k-1}}^{1}|\int_{\mathbb{R}^3}\frac{v_{k}}{|u|}u\nabla
Pdx|ds .
\end{equation*}

But, from Lemma~\ref{cheb} and Holder's inequality, we have

\begin{equation*}
\begin{split}
\int_{Q_{k-1}}v_{k}^2 = \int_{Q_{k-1}}v_{k}^2 \chi_{\{v_{k} \geqslant 0\}} \\
& \leqslant (\int_{Q_{k-1}}v_{k}^{\frac{10}{3}})^{\frac{3}{5}} \|\chi_{\{v_{k} \geqslant 0 \}}\|_{L^{\frac{5}{2}}(Q_{k-1})}\\
& \leqslant \|v_{k}\|_{L^{\frac{10}{3}}(Q_{k-1})}^{2} \frac{2^{\frac{4k}{3}}}{R^{\frac{4}{3}}}      C^{\frac{2}{5}}U_{k-1}^{\frac{2}{3}}\\
& \leqslant \|v_{k-1}\|_{L^{\frac{10}{3}}(Q_{k-1})}^{2} \frac{ 2^{\frac{4k}{3}}}{R^{\frac{4}{3}}}           
C^{\frac{2}{5}}U_{k-1}^{\frac{2}{3}}\\
& \leqslant CU_{k-1}^{\frac{5}{3}} \frac{ 2^{\frac{4k}{3}}}{R^{\frac{4}{3}}} .
\end{split}
\end{equation*}

As a result, we have the following conclusion

\begin{equation}
U_{k}\leqslant \frac{ 2^{\frac{10k}{3}}}{R^{\frac{4}{3}}}     C U_{k-1}^{\frac{5}{3}} +
\int_{T_{k-1}}^{1}|\int_{\mathbb{R}^3}\frac{v_{k}}{|u|}u \nabla p dx|ds .
\end{equation}

\noindent{\bf Step two}

Now, in order to estimate the term $\int_{T_{k-1}}^{1}|\int_{\mathbb{R}^3}\frac{v_{k}}{|u|}u\nabla Pdx|ds$, we would
like to carry out the following computation

\begin{equation*}
\begin{split}
-\triangle P = \sum \partial_{i} \partial_{j} (u_{i}u_{j})\\
& = \sum \partial_{i} \partial_{j} \{ (1-\frac{w_{k}}{|u|})u_{i} (1-\frac{w_{k}}{|u|})u_{j} \}
+  2\sum \partial_{i} \partial_{j} \{ (1-\frac{w_{k}}{|u|}) u_{i} \frac{w_{k}}{|u|} u_{j}  \}\\
& + \sum \partial_{i} \partial_{j} \{ \frac{w_{k}}{|u|} u_{i} \frac{w_{k}}{|u|} u_{j} \} ,
\end{split}
\end{equation*}

in which $w_{k}$ is given by $w_{k} = \{|u| - R^{\beta} (1- \frac{1}{2^{k}} )\}_{+}$, and $\beta$ is simply the arbritary index involved in proposition~\ref{index}. This motivates us to decompose $P$ as $P = P_{k1} + P_{k2} + P_{k3} $, in which

\begin{equation}\label{P1}
-\triangle P_{k1} = \sum \partial_{i} \partial_{j} \{ (1-\frac{w_{k}}{|u|})u_{i}
 (1-\frac{w_{k}}{|u|})u_{j}  \} ,
\end{equation}

\begin{equation}\label{P2}
- \triangle P_{k2} = \sum \partial_{i} \partial_{j} \{ 2(1-\frac{w_{k}}{|u|})u_{i} \frac{w_{k}}{|u|}
 u_{j} \}
\end{equation}

\begin{equation}\label{P3}
-\triangle P_{k3} = \sum \partial_{i} \partial_{j}\{\frac{w_{k}}{|u|}u_{i} \frac{w_{k}}{|u|}u_{j} \} .
\end{equation}

Here, we have to remind ourself that the cutting functions which are used in the decomposition of the pressure are indeed $w_{k} = \{|u| - R^{\beta} (1- \frac{1}{2^{k}}) \}_{+}$, for all $k \geqslant 0$ , in which $\beta$ is some suitable index strictly greater than $\frac{3}{2}$. With respect to the cutting functions $w_{k}$, we need to define the respective $D_{k}$ as follow:

\begin{equation*}
D_{k}^{2} = \frac{R^{\beta} (1-\frac{1}{2^{k}})}{|u|} \chi_{\{ w_{k} \geqslant 0 \}} |\nabla |u||^{2}
+ \frac{w_{k}}{|u|} |\nabla u|^{2} .
\end{equation*}

Then, just like what happens to the cutting functions $v_{k}$, we have the following assertions about the cutting functions $w_{k}$, which are easily verified.

\begin{itemize}
\item $|\nabla w_{k}| \leqslant D_{k}$, for all $k \geqslant 0$.
\item $|\nabla (\frac{w_{k}}{|u|}u_{i})| \leqslant 3 D_{k}$, for all $k \geqslant 0$, and $1 \leqslant i \leqslant 3$.
\item $ |\nabla (\frac{w_{k}}{|u|}) u_{i} | \leqslant 2 D_{k} $, for any $k \geqslant 0$, and
$1 \leqslant i \leqslant 3 $.
\end{itemize}

Besides these, we also need the following lemma which links $D_{k}$ to $d_{k}$.

\begin{lem}\label{D}
There is some sufficiently large $R_{0} > 1$, such that whenever $R > R_{0}$ and $k \geqslant 1$, we have $D_{k} \leqslant 5^{\frac{1}{2}} d_{k}$.
\end{lem}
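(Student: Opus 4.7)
The plan is to exploit the fact that the cutoff level $R^{\beta}(1-\tfrac{1}{2^{k}})$ defining $w_{k}$ is much larger than the cutoff level $R(1-\tfrac{1}{2^{k}})$ defining $v_{k}$, so that on the (small) support of $D_{k}$ the weight $\tfrac{v_{k}}{|u|}$ appearing in $d_{k}^{2}$ is bounded below by a constant. The starting point is the trivial inclusion $\{w_{k}\geqslant 0\}\subset \{v_{k}\geqslant 0\}$, valid because $R^{\beta}>R$ as soon as $\beta>1$ and $R>1$.

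First I would upper-bound $D_{k}^{2}$ in a very crude way: since $|\nabla|u||\leqslant |\nabla u|$ and $\tfrac{R^{\beta}(1-1/2^{k})}{|u|}+\tfrac{w_{k}}{|u|}=1$ pointwise on $\{w_{k}\geqslant 0\}$, we immediately obtain
\begin{equation*}
D_{k}^{2}\;\leqslant\;\chi_{\{w_{k}\geqslant 0\}}\,|\nabla u|^{2}.
\end{equation*}
Next I would produce a matching lower bound for $d_{k}^{2}$ on the same support. Keeping only the second term of $d_{k}^{2}$, on $\{w_{k}\geqslant 0\}$ we have $|u|\geqslant R^{\beta}(1-\tfrac{1}{2^{k}})$, hence
\begin{equation*}
\frac{v_{k}}{|u|}\;=\;1-\frac{R(1-\tfrac{1}{2^{k}})}{|u|}\;\geqslant\; 1-\frac{R(1-\tfrac{1}{2^{k}})}{R^{\beta}(1-\tfrac{1}{2^{k}})}\;=\;1-R^{1-\beta}.
\end{equation*}
Choosing $R_{0}>1$ large enough that $R^{1-\beta}\leqslant \tfrac{4}{5}$ for every $R>R_{0}$ (possible because $\beta>\tfrac{3}{2}>1$), this lower bound becomes $\tfrac{1}{5}$, and therefore
\begin{equation*}
d_{k}^{2}\;\geqslant\;\frac{v_{k}}{|u|}\,|\nabla u|^{2}\,\chi_{\{w_{k}\geqslant 0\}}\;\geqslant\;\frac{1}{5}\,|\nabla u|^{2}\,\chi_{\{w_{k}\geqslant 0\}}\;\geqslant\;\frac{1}{5}\,D_{k}^{2}.
\end{equation*}
Taking square roots delivers $D_{k}\leqslant 5^{1/2}d_{k}$, which is exactly the claim.

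I do not expect a genuine obstacle here: the only point that requires attention is tracking the threshold on $R$, i.e.\ making explicit that $R_{0}$ depends on $\beta$ through the inequality $R^{\beta-1}\geqslant \tfrac{5}{4}$, and verifying that it can be chosen independently of $k$ (which it can, since the bound $|u|\geqslant R^{\beta}(1-\tfrac{1}{2^{k}})\geqslant \tfrac{R^{\beta}}{2}$ on $\{w_{k}\geqslant 0\}$ is uniform in $k\geqslant 1$, and in fact the crude argument above already avoided the factor $(1-\tfrac{1}{2^{k}})$ by cancellation). The whole proof is essentially a one-line pointwise comparison once the inclusion of supports is observed.
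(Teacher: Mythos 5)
Your proof is correct and follows essentially the same route as the paper's: both are pointwise comparisons on the support $\{w_k\geqslant 0\}$ that exploit the lower bound $|u|\geqslant R^{\beta}(1-\tfrac{1}{2^k})$ there to conclude $D_k^2\leqslant 5\,d_k^2$ once $R$ is large enough (depending only on $\beta$). Your observation that the two weights in $D_k^2$ sum to $1$ on $\{w_k\geqslant 0\}$, yielding the clean intermediate bound $D_k^2\leqslant|\nabla u|^2\chi_{\{w_k\geqslant 0\}}$, is a slightly more streamlined way to organize the same comparison than the paper's term-by-term replacement of $R^{\beta}$ by $4v_k$, but the mechanism and the resulting constant are the same.
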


\begin{proof}
since $\frac{R^{\beta } - R}{R^{\beta } }$ trends to the limiting value $1$, as $R$ trends to $\infty$. So, there is some sufficiently large $R_{0} >1$ for which $(R^{\beta } - R) > \frac{R^{\beta }}{2} $, for all $R > R_{0}$. Now, notice that $\{w_{k} \geqslant 0  \} $ is a subset of $\{ v_{k} \geqslant (R^{\beta } - R    ) (1-\frac{1}{2^{k}}) \}$, for all $k \geqslant 0$. Hence, it follows that $\{ w_{k} \geqslant 0 \}$ is a subset of $\{v_{k} > \frac{R^{\beta }}{4} \}$, for all $k\geqslant 1$ and $R > R_{o}$. As a result, we can carry out the following computation

\begin{equation*}
\begin{split}
D_{k}^2 = \frac{R^{\beta}(1-\frac{1}{2^k} )}{|u|} \chi_{\{w_{k} \geqslant 0 \}} |\nabla |u||^{2} + \frac{w_{k}}{|u|} |\nabla u|^{2} \\
&\leqslant \frac{R^{\beta }}{|u|} \chi_{\{ w_{k} \geqslant 0 \}} |\nabla |u||^2 + \frac{v_{k}}{|u|} |\nabla u|^{2} \\
&\leqslant \frac{4v_{k}}{|u|} \chi_{\{w_{k} \geqslant 0 \}} |\nabla |u||^{2} + \frac{v_{k}}{|u|} |\nabla u|^{2}\\
&\leqslant \frac{5v_{k}}{|u|} |\nabla u|^2 \leqslant 5 d_{k}^{2} ,
\end{split}
\end{equation*}

for any $k \geqslant 1 $, and $R > R_{0}$. Hence, we have $D_{k} \leqslant 5^{\frac{1}{2}} d_{k}$, for all $k \geqslant 1$, and all $R > R_{0}$. So, we are done.
\end{proof}

Now, let us recall that we have already used the cutting functions $w_{k}$ to obtain the decomposition $P = P_{k1} + P_{k2} + P_{k3}$, in which $P_{k1}$, $P_{k2}$, and$P_{k3}$ are described in equations (\ref{P1}), (\ref{P2}), and (\ref{P3}) respectively.

Hence, we have

\begin{equation*}
\begin{split}
\int_{T_{k-1}}^{1} |\int_{\mathbb{R}^{3}} \frac{v_{k}}{|u|} u \nabla P dx  |dt\\
&\leqslant \int_{T_{k-1}}^{1} |\int_{\mathbb{R}^{3}} \nabla (\frac{v_{k}}{|u|}) u P_{k1} dx |dt +\int_{Q_{k-1}} (1-\frac{v_{k}}{|u|})|u| |\nabla P_{k2}|\\
& + \int_{Q_{k-1}} (1-\frac{v_{k}}{|u|})|u| |\nabla P_{k3}| .
\end{split}
\end{equation*}

\noindent{\bf Step 3}

We are now ready to deal with the term  $\int_{Q_{k-1}}(1-\frac{v_{k}}{|u|})|u||\nabla P_{k2}|$. For this purpose, let $p$ be such that $1< p < \frac{5}{4} $, and let $q = \frac{p}{p-1}$, so that $2<q<\infty$. By applying Holder's inequality, we find that

\begin{equation*}
\begin{split}
\|(1-\frac{v_{k}}{|u|})u \|_{L^{q}(\mathbb{R}^{3})}\\
& \leqslant
\|(1-\frac{v_{k}}{|u|})u\|_{L^{2}(\mathbb{R}^{3})}^{\frac{2}{q}}
\|(1-\frac{v_{k}}{|u|})u\|_{L^{\infty}(\mathbb{R}^{3})}^{1-\frac{2}{q}}\\
&\leqslant R^{1-\frac{2}{q}} \|(1-\frac{v_{k}}{|u|})u\|_{L^{2}(\mathbb{R}^{3})}^{\frac{2}{q}}\\
&\leqslant  R^{\frac{2}{p}-1} 
\|u\|_{L^{\infty}(0,1;L^{2}(\mathbb{R}^{3}))}^{2(1-\frac{1}{p})}
\end{split}
\end{equation*}

Hence, it follows from Holder's inequality that

\begin{equation*}
\int_{\mathbb{R}^{3}} (1-\frac{v_{k}}{|u|})|u||\nabla P_{k2}| dx 
\leqslant R^{\frac{2}{p}-1} 
\|u\|_{L^{\infty}(0,1;L^{2}(\mathbb{R}^{3}))}^{2(1-\frac{1}{p})} 
\{\int_{\mathbb{R}^{3}} |\nabla P_{k2}|^{p} dx \}^{\frac{1}{p}} .
\end{equation*}

Hence, we have

\begin{equation}\label{4}
\int_{Q_{k-1}} (1-\frac{v_{k}}{|u|})|u||\nabla P_{k2}| 
\leqslant R^{\frac{2}{p}-1}
\|u\|_{L^{\infty}(0,1;L^{2}(\mathbb{R}^{3}))}^{2(1-\frac{1}{p})}
\|\nabla P_{k2}\|_{L^{p}(Q_{k-1})} .
\end{equation}

But, we recognize that 

\begin{equation*}
\nabla P_{k2} = \sum R_{i}R_{j}\{2(1-\frac{w_{k}}{|u|})u_{i} \nabla [\frac{w_{k}}{|u|}u_{j}] 
+ 2(1-\frac{w_{k}}{|u|})u_{j}[\frac{w_{k}}{|u|}\nabla u_{i}]
- 2 \nabla [\frac{w_{k}}{|u|}]u_{i} \frac{w_{k}}{|u|}u_{j}\}.
\end{equation*}

Moreover, it is straightforward to see that for any $1 \leqslant i,j \leqslant 3$, we have

\begin{itemize}
\item $ |2(1-\frac{w_{k}}{|u|})u_{i} \nabla [\frac{w_{k}}{|u|}u_{j}] +  2(1-\frac{w_{k}}{|u|})u_{j}[\frac{w_{k}}{|u|}\nabla u_{i}]| \leqslant 8 R^{\beta } D_{k}$.
\item $|2 \nabla [\frac{w_{k}}{|u|}]u_{i} \frac{w_{k}}{|u|}u_{j}| \leqslant 8 w_{k} D_{k}$.
\end{itemize}

So, we can decompose $\nabla P_{k2}$ as $\nabla P_{k2} = G_{k21} + G_{k22}$, where $G_{k21}$ and $G_{k22}$ are given by

\begin{itemize}
\item $G_{k21} = \sum R_{i} R_{j} \{2(1-\frac{w_{k}}{|u|})u_{i} \nabla [\frac{w_{k}}{|u|}u_{j}] +   2(1-\frac{w_{k}}{|u|})u_{j}[\frac{w_{k}}{|u|}\nabla u_{i}]\}$.
\item $G_{k22} = -\sum R_{i} R_{j} \{2 \nabla [\frac{w_{k}}{|u|}]u_{i} \frac{w_{k}}{|u|}u_{j}\}$.
\end{itemize}

In order to use inequality (\ref{4}), we need to estimate $\|G_{k21}\|_{L^{p}(Q_{k-1})}$ and $\|G_{k22}\|_{L^{p}(Q_{k-1})}$ respectively, for $p$ with $1 < p < \frac{5}{4} $. Indeed, by applying the Zygmund-Calderon Theorem, we can deduce that

\begin{itemize}
\item $\|G_{k21}\|_{L^{p}(Q_{k-1})} \leqslant C_{p} R^{\beta } \|D_{k}\|_{L^{p}(Q_{k-1})}$,
\item $ \|G_{k22}\|_{L^{p}(Q_{k-1})} \leqslant C_{p} \|w_{k}D_{k}\|_{L^{p}(Q_{k-1})}$,
\end{itemize}

 where $C_{p}$ is some constant depending only on $p$. But it turns out that

\begin{equation*}
\begin{split}
\|D_{k}\|_{L^{p}(Q_{k-1})}^{p} = \int_{Q_{k-1}} D_{k}^{p} \chi_{\{w_{k} \geqslant 0 \}}\\
& \leqslant \{\int_{Q_{k-1}}D_{k}^{2}\}^{\frac{p}{2}} 
\|\chi_{\{w_{k} \geqslant 0 \}}\|_{L^{\frac{2}{2-p}}(Q_{k-1})}\\
&\leqslant  \frac{5^{\frac{p}{2}}}{R^{\frac{5}{3}\beta (2-p)}}\|d_{k}\|_{L^{2}(Q_{k-1})}^{p} 
2^{\frac{5k}{3}(2-p)} C_{p} U_{k-1}^{\frac{5}{6}(2-p)}\\
& \leqslant \frac{1}{R^{\frac{5}{3}\beta (2-p)}} C_{p} U_{k-1}^{\frac{5-p}{3}}
2^{\frac{5(2-p)k}{3}} .
\end{split}
\end{equation*}

That is , we have

\begin{equation*}
\|D_{k}\|_{L^{p}(Q_{k-1})} \leqslant \frac{1}{R^{\frac{5}{3p}\beta (2-p) }}
C_{p} U_{k-1}^{\frac{5-p}{3p}} 2^{\frac{5(2-p)k}{3p}} .
\end{equation*}

Hence, it follows that

\begin{equation}\label{5}
\|G_{k21}\|_{L^{p}(Q_{k-1})} \leqslant \frac{1}{R^{\beta (\frac{10-8p}{3p})}}
C_{p} U_{k-1}^{\frac{5-p}{3p}} 2^{\frac{5(2-p)k}{3p}} .
\end{equation}

On the other hand, we have

\begin{equation*}
\begin{split}
\|w_{k}D_{k}\|_{L^{p}(Q_{k-1})}^{p} = \int_{Q_{k-1}} w_{k}^{p} D_{k}^{p}\\
& \leqslant \{ \int_{Q_{k-1}}w_{k}^{\frac{2p}{2-p}}\}^{\frac{2-p}{2}}
\{\int_{Q_{k-1}}D_{k}^2  \}^{\frac{p}{2}}\\
& \leqslant C_{p} \{\int_{Q_{k-1}} w_{k}^{\frac{2p}{2-p}}\}^{\frac{2-p}{2}} 
U_{k-1}^{\frac{p}{2}} .
\end{split}
\end{equation*}

Now, let us recall that $1 < p < \frac{5}{4}$, and put $r = \frac{2p}{2-p}$. we then recognize that $2 < r = \frac{2p}{2-p} < \frac{10}{3}$, if $1 < p < \frac{5}{4}$. So, we can have the following estimation

\begin{equation*}
\begin{split}
\int_{Q_{k-1}}w_{k}^{\frac{2p}{2-p}}
 = \int_{Q_{k-1}} w_{k}^{r} \chi_{\{ w_{k} \geqslant 0  \}}\\
&\leqslant \int_{Q_{k-1}} w_{k}^{r} \chi_{\{ w_{k-1} \geqslant \frac{R^{\beta}}{2^{k}}\}}\\
&\leqslant \frac{1}{R^{\beta (\frac{10}{3} -r)}} 2^{k( \frac{10}{3} - r )}
\int_{Q_{k-1}} w_{k}^{\frac{10}{3}}\\
&\leqslant \frac{1}{R^{\beta \frac{20-16p}{3(2-p)} }} 2^{\frac{k(20-16p)}{3(2-p)}} 
U_{k-1}^{\frac{5}{3}} .
\end{split}
\end{equation*}

Hence, it follows that

\begin{equation}\label{6}
\|G_{k22}\|_{L^{p}(Q_{k-1})} \leqslant  \|w_{k}D_{k}\|_{L^{p}(Q_{k-1})} \leqslant C_{p} 
\frac{2^{k \frac{10-8p}{3p}}}{R^{\beta \frac{10-8p}{3p}}} U_{k-1}^{\frac{5-p}{3p}} .
\end{equation}

By combining inequalities (\ref{4}), (\ref{5}), (\ref{6}), we deduce that

\begin{equation}\label{middle}
\begin{split}
\int_{Q_{k-1}} (1-\frac{v_{k}}{|u|})|u||\nabla P_{k2}|\\
&\leqslant \frac{1}{R^{\beta \frac{10-8p}{3p} - \frac{2-p}{p}}}
C_{p} \|u\|_{L^{\infty}(0,1;L^{2}(\mathbb{R}^{3}))}^{2(1-\frac{1}{p})} U_{k-1}^{\frac{5-p}{3p}}
2^{\frac{10-5p}{3p} k}  .
\end{split}
\end{equation}

Notice that $\beta (\frac{10-8p}{3p}) - (\frac{2-p}{p}) > 0$ if and only if
 $\beta > \frac{6-3p}{10-8p}$. Moreover, we know that the term $\frac{6-3p}{10-8p}$ is always positive, for $1 < p < \frac{5}{4}$. In addition, we know that as $p$ trends to $1$,$ \frac{6-3p}{10-8p}$ trends to $\frac{3}{2}$. This means that even though $\beta$ cannot be exactly $\frac{3}{2}$, $\beta > \frac{3}{2}$ can be adjusted to be as close to $\frac{3}{2}$ as we desire.\\

As for the term $\int_{Q_{k-1}} (1-\frac{v_{k}}{|u|})|u||\nabla P_{k3}|$. We first notice that

\begin{equation*}
P_{k3} = \sum R_{i}R_{j}\{\frac{w_{k}}{|u|}u_{i}\frac{w_{k}}{|u|}u_{j}\} . 
\end{equation*}

So, we know that 

\begin{equation*}
\nabla P_{k3} = \sum R_{i}R_{j} \{\nabla [\frac{w_{k}}{|u|}u_{i}]\frac{w_{k}}{|u|}u_{j}
+ \frac{w_{k}}{|u|}u_{i} \nabla [\frac{w_{k}}{|u|} u_{j} ] \} ,
\end{equation*}

with 

\begin{equation*}
| \nabla [\frac{w_{k}}{|u|}u_{i}]\frac{w_{k}}{|u|}u_{j}
+ \frac{w_{k}}{|u|}u_{i} \nabla [\frac{w_{k}}{|u|} u_{j} ] |
\leqslant 6 w_{k}D_{k} .
\end{equation*}

 So, it follows again from the Risez's theorem in the theory of singular operator that $\|\nabla P_{k3}\|_{L^{p}(\mathbb{R}^{3})} \leqslant C_{p} 
\|w_{k}D_{k}\|_{L^{p}(\mathbb{R}^{3})}$, in which $C_{p}$ is some constant depending only on $p$. So, we see that we can repeat the same type of estimation, just as what we have done to the term $\int_{Q_{k-1}}(1-\frac{v_{k}}{|u|})|u||\nabla P_{k2}|$, to conclude that

\begin{equation}\label{easy}
\begin{split}
\int_{Q_{k-1}} (1-\frac{v_{k}}{|u|})|u||\nabla P_{k3}|\\
&\leqslant R^{\frac{2}{p}-1}\|u\|_{L^{\infty}(0,1;L^{2}(\mathbb{R}^{3}))}^{2(1-\frac{1}{p})}
\|\nabla P_{k3}\|_{L^{p}(Q_{k-1})}\\
&\leqslant \frac{C_{p}\|u\|_{L^{\infty}(0,1; L^{2}(\mathbb{R}^{3}))}^{2(1-\frac{1}{p})}}{
R^{\beta \frac{10-8p}{3p} - \frac{2-p}{p}}} U_{k-1}^{\frac{5-p}{3p}} 2^{\frac{(10-5p)k}{3p}}.
\end{split}
\end{equation}

\noindent{\bf Step four}

Now, let us turn our attention to the term 
$\int_{T_{k-1}}^{1}|\int_{\mathbb{R}^{3}} \nabla (\frac{v_{k}}{|u|})uP_{k1}dx|ds$. Before we deal with the term written as above, let us recall that the weak solution $u$ that we are dealing with now is the one verifying the following condition

\begin{equation*}
\frac{|u\cdot \nabla F|}{|u|^{\gamma}} \leqslant A |F| ,
\end{equation*}

where $F = - \frac{u \cdot \nabla |u|}{|u|^2}$, and $\gamma$ is some index with $0 < \gamma < \frac{1}{3}$. We need to introduce the following classical Theorem of harmonic analysis which is due to John and Nirenberg~\cite{John}.

\begin{thm}
Let $B$ be a ball with finite radius sitting in $\mathbb{R}^{3}$. Then, there exists some constants $\alpha$, and $K$, with $0 < \alpha < \infty$, and $0 < K < \infty$, depending only on the ball $B$ and $n$, such that for any given $f \in BMO(\mathbb{R}^{n})$, we have $\int_{B} exp(\alpha \frac{|f-f_{B}|}{\|f\|_{BMO} }) \leqslant K$, where the symbol $f_{B}$ stands for the mean value of $f$ over $B$.
\end{thm}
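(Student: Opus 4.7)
The plan is to reduce the exponential integrability statement to an exponential decay estimate on the distribution function of $f - f_B$ and then recover the conclusion via the layer-cake formula. After normalizing so that $\|f\|_{BMO} = 1$, it will suffice to produce constants $c_1, c_2 > 0$ depending only on the dimension such that
\begin{equation*}
|\{x \in B : |f(x) - f_B| > \lambda\}| \leqslant c_1 e^{-c_2 \lambda}|B|
\end{equation*}
for every $\lambda > 0$. Once this decay is established, I would choose any $\alpha < c_2$ and write $\int_B e^{\alpha|f - f_B|} dx = |B| + \int_0^{\infty}\alpha e^{\alpha \lambda} |\{x \in B : |f(x)-f_B| > \lambda\}| d\lambda$; the integrand is bounded by $c_1 \alpha e^{(\alpha - c_2)\lambda}|B|$, so the integral is finite and yields $K$ depending on $|B|$, $\alpha$, $c_1$, $c_2$. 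For convenience I would replace the ball $B$ by an enclosing cube $Q_0$ to make dyadic decompositions natural; the difference $|f_B - f_{Q_0}|$ is controlled by a constant multiple of $\|f\|_{BMO}$ so this change only affects constants.

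The core step is an iterated Calderón-Zygmund stopping-time decomposition. Fix a parameter $\lambda_0 > 1$ to be chosen. On $Q_0$, apply the dyadic stopping-time argument to $|f - f_{Q_0}|$ at height $\lambda_0$: we obtain a maximal collection of disjoint dyadic subcubes $\{Q_j^{(1)}\}$ for which the mean of $|f - f_{Q_0}|$ first exceeds $\lambda_0$. By the stopping criterion $\sum_j |Q_j^{(1)}| \leqslant \lambda_0^{-1}|Q_0|$; by the maximality and the ratio with the parent cube, $|f_{Q_j^{(1)}} - f_{Q_0}| \leqslant 2^n \lambda_0$; and by Lebesgue differentiation $|f - f_{Q_0}| \leqslant \lambda_0$ almost everywhere outside $\bigcup_j Q_j^{(1)}$. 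I would then iterate: on each $Q_j^{(1)}$, apply the same decomposition to $f - f_{Q_j^{(1)}}$ at height $\lambda_0$, producing a second generation $\{Q_k^{(2)}\}$ with total measure at most $\lambda_0^{-2}|Q_0|$. After $N$ iterations, the set on which $|f - f_{Q_0}| > (2^n + 1)N\lambda_0$ is contained (up to a null set) in the $N$-th generation union, whose measure is bounded by $\lambda_0^{-N}|Q_0|$, by a telescoping argument chaining the parent-average bounds.

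Translating this $N$-th generation estimate to an arbitrary height $\lambda > 0$ by choosing $N = \lfloor \lambda / ((2^n+1)\lambda_0)\rfloor$ produces the exponential distributional decay with $c_2 = (\log \lambda_0)/((2^n + 1)\lambda_0)$, and any $\lambda_0 > 1$ works (optimizing over $\lambda_0$ gives the best constant). The main obstacle I expect is the bookkeeping across generations: one must carefully track that the quantity subtracted in each subcube is the local average $f_{Q_j^{(r)}}$, chain the bounds $|f_{Q_j^{(r+1)}} - f_{Q_j^{(r)}}| \leqslant 2^n\lambda_0$ by telescoping back to $f_{Q_0}$, and verify that this telescoping contributes only linearly in $N$ so that the bound on $|f - f_{Q_0}|$ outside the $N$-th generation is proportional to $N$ rather than a larger power. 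Once the iteration is correctly set up, the sum-of-measures bound, the Lebesgue differentiation step, and the final layer-cake integration are all routine.
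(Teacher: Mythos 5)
The paper does not prove this theorem; it is quoted verbatim as the classical John--Nirenberg inequality, and the original paper of John and Nirenberg~\cite{John} is cited as the reference. Your argument is the standard iterated Calder\'on--Zygmund stopping-time proof (essentially the original one from that 1961 paper), and it is correct as laid out: the total measure of the generation-$N$ stopping cubes decreases by the factor $\lambda_{0}^{-1}$ per generation, the telescoping of the mean-oscillation bounds $|f_{Q^{(r+1)}}-f_{Q^{(r)}}|\leqslant 2^{n}\lambda_{0}$ contributes only linearly in $N$, Lebesgue differentiation controls $|f-f_{Q^{(r)}}|$ off the generation-$(r+1)$ cubes, and the layer-cake integration against the resulting exponential distributional decay closes the argument. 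Two points worth making explicit when you write this out: first, the normalization $\|f\|_{BMO}=1$ together with the choice $\lambda_{0}>1$ is precisely what guarantees that the Calder\'on--Zygmund decomposition can be applied at every generation, since the mean of $|f-f_{Q}|$ over any cube $Q$ is at most $\|f\|_{BMO}=1<\lambda_{0}$; second, the enclosing cube $Q_{0}$ should be taken concentric with $B$ and of side length comparable to the diameter of $B$, so that $|Q_{0}|/|B|$ is a purely dimensional constant and the replacement of $f_{B}$ by $f_{Q_{0}}$ costs only a bounded shift $|f_{B}-f_{Q_{0}}|\leqslant(|Q_{0}|/|B|)\|f\|_{BMO}$, which is absorbed into $K$ at the final step.
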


We now need to establish the following lemma by using the theorem quoted as above.

\begin{lem}\label{BMO}
Let $B$ be a ball with finite radius sitting in $\mathbb{R}^{3}$. There exists some finite positive constants $\alpha$ and $K$,depending only on $B$, such that for every $\mu \geqslant 0$, every $f\in BMO(\mathbb{R}^3)$ with $\int_{B}fdx =0$, and $p$ with $1 < p < \frac{5}{4}$, we have
$\int_{B} \mu |f| \leqslant \frac{2p}{\alpha (p-1)} 
\{ 1+ K^{1-\frac{1}{p}}\}  \|f\|_{BMO}
\{ (\int_{B}\mu )^{\frac{1}{p}}  +  \int_{B} \mu log^{+}\mu  \}$.
\end{lem}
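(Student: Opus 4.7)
The strategy is to combine the John-Nirenberg theorem just cited (which, since $\int_B f\,dx = 0$ forces $f_B = 0$, yields $\int_B \exp(\alpha|f|/\|f\|_{BMO})\,dx \leq K$) with a splitting $B = B_-\cup B_+$, where $B_- = B\cap\{\mu\leq 1\}$ and $B_+ = B\cap\{\mu>1\}$. On $B_-$ I would use H\"older with conjugate exponents $p$ and $p' = p/(p-1)$; since $\mu\leq 1$ gives $\mu^{p}\leq\mu$, this yields
\[
\int_{B_-}\mu|f|\,dx \leq \left(\int_B\mu\,dx\right)^{1/p}\left(\int_B|f|^{p'}\,dx\right)^{1/p'},
\]
and the $L^{p'}$-norm of $f$ is controlled by plugging Chebyshev (which produces $|\{|f|>\lambda\}\cap B|\leq Ke^{-\alpha\lambda/\|f\|_{BMO}}$) into the layer-cake formula; the elementary bound $\Gamma(p'+1)^{1/p'}\leq p'$ then gives $(\int_B|f|^{p'})^{1/p'}\leq p'K^{1/p'}\|f\|_{BMO}/\alpha$.

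On $B_+$ the key tool is the elementary Young inequality $\mu t \leq \mu\log^+\mu + e^t - 1$ for all $\mu,t\geq 0$, which I would verify by minimizing $e^t-\mu t$ in $t$ and separating the cases $\mu\leq 1$ and $\mu\geq 1$. Applied pointwise with $t = \alpha|f|/(p'\|f\|_{BMO})$, it replaces $\mu|f|$ by $\mu\log^+\mu$ plus an exponential of $|f|$ at the smaller exponent $\alpha/p'$. A second H\"older step with the conjugate pair $(p',p)$ then raises this inner exponent back to exactly $\alpha$, where John-Nirenberg becomes applicable, and produces the factor $|B_+|^{1/p}\leq(\int_B\mu)^{1/p}$ (using $\mu>1$ on $B_+$). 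The net result is
\[
\int_{B_+}\mu|f|\,dx \leq \frac{p'\|f\|_{BMO}}{\alpha}\left[\int_B\mu\log^+\mu\,dx + K^{1/p'}\left(\int_B\mu\,dx\right)^{1/p}\right].
\]

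Summing the two estimates and using $p'/\alpha = p/(\alpha(p-1))$ together with $1/p' = 1-1/p$ recovers the claimed constant $\frac{2p}{\alpha(p-1)}(1+K^{1-1/p})$ after a straightforward majorization. The only non-routine point is the rescaling by $1/p'$ inside the exponential on $B_+$: without it, the Young step would leave behind an additive constant $K$ independent of $\mu$, one that cannot be replaced by the factor $K^{1/p'}(\int_B\mu)^{1/p}$ which controls the correct tail behavior of the bound as $\int_B\mu\to 0$. Recognizing that the correct rescaling is by the conjugate exponent $p'$, chosen so that the subsequent H\"older on the exponential lands exactly at the John-Nirenberg threshold $\alpha$, is the main obstacle to the argument.
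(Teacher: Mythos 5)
Your proof is correct and reaches the same bound, but by a genuinely different decomposition from the paper's. Both arguments rest on the John--Nirenberg bound $\int_B e^{\alpha|f|/\|f\|_{BMO}}\,dx \leq K$ (applicable since $f_B=0$). The paper, however, splits $B$ into $\{\mu \leq e^{\alpha|f|/(2\|f\|_{BMO})}\}$ and its complement, a splitting that depends jointly on $\mu$ and $f$; the factor $1/2$ in the exponent is the key device. On the first set, the elementary bound $t<e^t$ together with the defining inequality $\mu \leq e^{\alpha|f|/(2\|f\|_{BMO})}$ lets the paper convert $\mu^{1/p'}|f|$ into a multiple of $e^{\alpha|f|/\|f\|_{BMO}}$ after the weighted H\"older step $\mu|f|=\mu^{1/p}\cdot\mu^{1/p'}|f|$, and then one invokes John--Nirenberg once. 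On the complement, the set condition directly yields $|f|<\tfrac{2}{\alpha}\|f\|_{BMO}\log^{+}\mu$ pointwise, producing the $\mu\log^{+}\mu$ term with no further work. Your splitting into $\{\mu\leq 1\}$ and $\{\mu>1\}$ is independent of $f$, so you compensate with different machinery on each piece: a layer-cake estimate of $\|f\|_{L^{p'}(B)}$ via the distributional form of John--Nirenberg together with $\Gamma(p'+1)^{1/p'}\leq p'$ on the first, and on the second the explicit Young/Legendre inequality $\mu t \leq \mu\log^{+}\mu + e^t - 1$ for the dual pair $L\log L$--$\exp L$ (which the paper exploits only implicitly, through its choice of splitting set), followed by the $1/p'$ rescaling inside the exponential and a second H\"older step. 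The point you flag as the crux is indeed the crux: that rescaling-plus-H\"older is what turns the additive John--Nirenberg constant $K$ into the multiplicative $K^{1-1/p}(\int_B\mu)^{1/p}$, so that the bound degenerates correctly as $\int_B\mu\to 0$. The paper gets the same tail behavior more economically by encoding the exponential comparison into the splitting set itself; your route is somewhat more technical but makes the underlying Orlicz duality explicit and tracks the constants through a Stirling-type bound, which is instructive.
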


\begin{proof}
For any given $\mu \geqslant 0$, and any $f \in BMO(\mathbb{R}^{3})$ with $\int_{B}f dx = 0$, we do the following splitting

\begin{equation*}
\int_{B} \mu |f| = \int_{B} \mu |f| 
\chi_{\{ \mu \leqslant exp(\frac{\alpha |f|}{ 2 \|f\|_{BMO}})\}}
+\int_{B} \mu |f| \chi_{\{ \mu > exp( \frac{\alpha |f| }{2\|f\|_{BMO} } ) \}} .
\end{equation*}

Given $p$ be such that $1<p<\frac{5}{4}$, and let $q = \frac{p}{p-1}$ be the conjugate exponent of $p$. So, it follows from Holder's inequality that

\begin{equation*}
\begin{split}
\int_{B} \mu |f| \chi_{\{ \mu \leqslant exp(\frac{\alpha |f|}{2\|f\|_{BMO}}) \}}\\
& \leqslant \{ \int_{B} \mu \chi_{\{ \mu \leqslant exp(\frac{\alpha |f|}{2 \|f\|_{BMO} })           \} } \}^{\frac{1}{p}} 
\{ \int_{B} \mu |f|^{q} \chi_{\{ \mu \leqslant exp(\frac{\alpha |f|}{2\|f\|_{BMO}}) \}}\}^{\frac{1}{q}}
\end{split}
\end{equation*}

Since $t < exp(t)$, for all $t\in \mathbb{R}$, we have 
$ \frac{\alpha |f|}{2q \|f\|_{BMO}} < exp(\frac{ \alpha |f|}{2q \|f\|_{BMO}})$. Hence, we have

\begin{equation}\label{8}
\begin{split}
\int_{B} \mu |f| \chi_{\{ \mu \leqslant exp(\frac{\alpha |f|}{2\|f\|_{BMO}})\}}\\
& \leqslant  \frac{2q}{\alpha} \|f\|_{BMO} \{\int_{B} \mu  \}^{\frac{1}{p}}
\{ \int_{B} exp(\alpha \frac{|f|}{\|f\|_{BMO}})\}^{\frac{1}{q}}\\
& \leqslant K^{1-\frac{1}{p}}
\frac{2q}{\alpha } \|f\|_{BMO} (\int_{B} \mu )^{\frac{1}{p}} ,
\end{split}
\end{equation}

But, on the other hand, we have 

\begin{equation}\label{9}
\int_{B} \mu |f| \chi_{\{ \mu > exp(\frac{\alpha |f| }{2 \|f\|_{BMO}})\}}
\leqslant \int_{B} \frac{2}{\alpha } \|f\|_{BMO} \mu log^{+}\mu   .
\end{equation}

By combining inequalities (\ref{8}), and (\ref{9}), we conclude that

\begin{equation*}
\int_{B}\mu |f| \leqslant \frac{2p}{\alpha (p-1)} 
\{1+   K^{1-\frac{1}{p}}\}
\|f\|_{BMO} \{( \int_{B} \mu )^{\frac{1}{p}} + \int_{B} \mu log^{+} \mu \} .
\end{equation*}
\end{proof}

We are now ready to work with the term 
$\int_{T_{k-1}}^{1}|\int_{\mathbb{R}^{3}} \nabla (\frac{v_{k}}{|u|})u P_{k1}dx |ds$.

Indeed, by a simple application of the partial regularity theorem due to Caffarelli, Kohn, and Nirenberg, it can be shown that, if $B$ is a sufficiently large open ball centered at the origin of $\mathbb{R}^{3}$(we will choose $B$ to be large enough so that it will satisfy $|B|> 1$), then it follows that 

\begin{itemize}
\item $[\frac{1}{2}, 1]\times \mathbb{R}^{3}\cap  \{ v_{k} \geqslant 0\}$ is a subset of $[\frac{1}{2}, 1]\times B$, for all $k \geqslant 1$, and if $R$ is sufficiently lage.
\end{itemize}

On the other hand, since $\nabla (\frac{v_{k}}{|u|})u 
= -R(1-\frac{1}{2^{k}})F\chi_{\{v_{k}\geqslant 0\}} $. So, we have

\begin{equation*}
\begin{split}
|\int_{\mathbb{R}^{3}}\nabla (\frac{v_{k}}{|u|})u P_{k1}dx|\\
&= |\int_{B} R(1-\frac{1}{2^{k}})F\chi_{\{v_{k}\geqslant 0 \}}P_{k1}dx|\\
&\leqslant R\int_{B}|F|\chi_{\{v_{k}\geqslant 0 \}} |P_{k1} - (P_{k1})_{B}|dx\\
&+  R\int_{B}|F|\chi_{\{v_{k}\geqslant 0\}}|(P_{k1})_{B}|dx ,
\end{split} 
\end{equation*}

for all $k\geqslant 1$, and all $\frac{1}{2} < t < 1$, provided that $R$ is sufficiently large (here, the symbol $(P_{k1})_{B}$ stands for the average value of
 $P_{k1}$ over the ball $B$ ).

Now, since $P_{k1} = \sum R_{i}R_{j} \{(1-\frac{w_{k}}{|u|})u_{i}(1-\frac{w_{k}}{|u|})u_{j}\}$, it follows from the Risez's Theorem in the theory of singular integral that
$\|P_{k1}(t,\cdot )\|_{L^{2}(\mathbb{R}^{3})} 
\leqslant C_{2}R^{\beta }\|u(t,\cdot )\|_{L^{2}(\mathbb{R}^{3})}$, for all $t\in [0,1]$, in which $C_{2}$ is some constant depending only on $2$. So, we can use the Holder's inequality to carry out the following estimation

\begin{equation*}
\begin{split}
|(P_{k1})_{B}(t)|\leqslant\frac{1}{|B|}\int_{B}|P_{k1}(t,x)|dx\\ 
&\leqslant \frac{1}{|B|^{\frac{1}{2}}} \|P_{k1}(t,\cdot )\|_{L^{2}(B)}\\
&\leqslant \frac{1}{|B|^{\frac{1}{2}}}C_{2}R^{\beta }\|u(t,\cdot )\|_{L^{2}(\mathbb{R}^{3})}\\
& \leqslant C_{2} R^{\beta }\|u\|_{L^{\infty}(0,1; L^{2}(\mathbb{R}^{3}))} .
\end{split}
\end{equation*}

We remark that the last line of the above inequality holds since our open ball $B$ is sufficiently large so that $|B| > 1$. As a result, it follows that

\begin{equation}\label{10}
\begin{split}
|\int_{\mathbb{R}^{3}}\nabla (\frac{v_{k}}{|u|}) u P_{k1}dx|\\
&\leqslant R \int_{B}|F|\chi_{\{v_{k}\geqslant 0\}}|P_{k1}- (P_{k1})_{B}|dx\\
&+  C_{2}R\|u\|_{L^{\infty}(0,1;L^{2}(\mathbb{R}^{3}))} 
\int_{B}R^{\beta } |F|\chi_{\{v_{k}\geqslant 0\}}
\end{split}
\end{equation}

Indeed, the operator $R_{i}R_{j}$ is indeed a Zygmund- Calderon operator, and so $R_{i}R_{j}$ must be a bounded operator from $L^{\infty}(\mathbb{R}^{3})$ to $BMO(\mathbb{R}^{3})$. Hence we can deduce that

\begin{equation*}
\begin{split}
\|P_{k1}(t, \cdot )- (P_{k1})_{B}(t)\|_{BMO}\\
& = \|P_{k1}(t,\cdot )\|_{BMO}\\
&\leqslant C_{0} 
\|(1-\frac{w_{k}}{|u|})u_{i}(1-\frac{w_{k}}{|u|})u_{j}\|_{L^{\infty}(\mathbb{R}^{3})}\\
&\leqslant C_{0}R^{2\beta } ,
\end{split} 
\end{equation*}

for all $t\in (0,1)$, in which $C_{0}$ is some constant depending only on $\mathbb{R}^{3}$.
So, we now apply Lemma~\ref{BMO} with $\mu = |F|\chi_{\{v_{k} \geqslant 0 \}}$, and 
$f = P_{k1} - (P_{k1})_{B}$ to deduce that

\begin{equation*}
\begin{split}
\int_{B} |F| \chi_{\{v_{k}\geqslant 0 \}}|P_{k1}-(P_{k1})_{B}|dx\\
&\leqslant \frac{2pC_{0}}{\alpha (p-1)} \{1+ K^{1-\frac{1}{p}}\}\times \\
&\{(\int_{B}R^{2p\beta }|F|\chi_{\{v_{k}\geqslant 0\}})^{\frac{1}{p}}
+ \int_{B}R^{2\beta }|F|log^{+}|F|\chi_{\{v_{k}\geqslant 0 \}} \} ,
\end{split}
\end{equation*}

in which the symbol $(P_{k1})_{B}$ stands for the mean value of $P_{k1}$ over the open ball $B$.
Since we know that $\{v_{k}\geqslant 0\}$ is a subset of $\{|u|\geqslant \frac{R}{2}\}$, for all $k\geqslant 1$, so it follows from the above inequality that

\begin{equation*}
\begin{split}
\int_{B}|F|\chi_{\{  v_{k}\geqslant 0 \}}|P_{k1} - (P_{k1})_{B}|dx\\
&\leqslant \frac{2C_{0}}{\alpha} \frac{p}{p-1}4^{p\beta}
\{ 1 + K^{1-\frac{1}{p}}\}\times\\
&\{ (\int_{B}|u|^{2p\beta}|F|\chi_{\{v_{k}\geqslant 0\}})^{\frac{1}{p}}\\
&+ \int_{B} |u|^{2\beta}|F|log^{+}|F|\cdot \chi_{\{v_{k}\geqslant 0\}}\} . 
\end{split}
\end{equation*}

So, we can conclude from inequality (\ref{10}), and the above inequality that

\begin{equation}\label{11}
\begin{split}
\int_{T_{k-1}}^{1}|\int_{\mathbb{R}^{3}}\nabla (\frac{v_{k}}{|u|})u P_{k1}dx|dt\\
&\leqslant R \frac{2C_{0}}{\alpha }\frac{p}{p-1}4^{p\beta } (1+K^{1-\frac{1}{p}})\times \\ 
&\{(\int_{Q_{k-1}}|u|^{2p\beta }|F|\chi_{\{v_{k}\geqslant 0 \}})^{\frac{1}{p}}\\
& + \int_{Q_{k-1}}|u|^{2\beta }|F|log(1+|F|)\chi_{\{v_{k}\geqslant 0 \}} \}\\
& + C_{2}2^{\beta }R \|u\|_{L^{\infty}(0,1; L^{2}(\mathbb{R}^{3}))}
\int_{Q_{k-1}}|u|^{\beta }|F|\chi_{\{v_{k}\geqslant 0 \}} . 
\end{split}
\end{equation}

 Now, notice that

\begin{equation}\label{12}
\begin{split}
\int_{Q_{k-1}} |u|^{2\beta }|F|log(1+|F|)\chi_{\{v_{k}\geqslant 0\}}\\
&\leqslant \int_{Q_{k-1}} |u|^{2\beta }|F|log(1+|F|)\chi_{\{|F|\leqslant \frac{1}{R}\}}\chi_{\{v_{k}\geqslant 0   \}}\\
& + \int_{Q_{k-1}}|u|^{2\beta }|F|log(1+|F|)\chi_{\{|F|>\frac{1}{R}\}}\chi_{\{v_{k}\geqslant 0\}}\\
&\leqslant \frac{log2}{R}\int_{Q_{k-1}}|u|^{2\beta }\chi_{\{v_{k}\geqslant 0 \}}\\
&+ \int_{Q_{k-1}}|u|^{2\beta }|F|log(1+|F|)\chi_{\{|F|>\frac{1}{R}\}}
\chi_{\{v_{k}\geqslant 0\}} .
\end{split}
\end{equation}

\noindent{\bf Step five}

To deal with the second term in the last line of inequality (\ref{12}), we consider the sequence $\{\phi_{k} \}_{k=1}^{\infty}$ of nonnegative continuous functions on $[0,\infty)$, which are defined by 

\begin{itemize}
\item $\phi_{k} (t)= 0$, for all $t\in [0, C_{k}]$.
\item $\phi_{k} (t)= t - C_{k}$, for all $t\in (C_{k},  C_{k}+1)$.
\item $\phi_{k} (t)= 1$, for all $t\in [C_{k}+1, +\infty)$.
\end{itemize}

where the symbol $C_{k}$ stands for $C_{k} = R(1-\frac{1}{2^{k}})$, for every $k\geqslant 1$. Moreover, we also need a smooth function $\psi : \mathbb{R}\rightarrow \mathbb{R}$ satisfying the following conditions that:

\begin{itemize}
\item $\psi (t)=1$, for all $t\geqslant \frac{1}{R}$.
\item $0 < \psi (t) < 1$, for all $t$ with $0< t < \frac{1}{R}$.
\item $\psi (0) = 0$.
\item $-1 < \psi (t) < 0$, for all $t$ with $-\frac{1}{R} < t < 0$.
\item $\psi (t) = -1$, for all $t \leqslant -\frac{1}{R}$.
\item $0 \leqslant \frac{d}{dt}\psi  \leqslant 2R$, for all $t\in \mathbb{R}$. 
\end{itemize}

With the above preperation, let $\lambda$ be such that $2 < \lambda < \frac{10}{3} + 1 - \gamma$. We can then carry out the following calculation

\begin{equation}\label{13}
\begin{split}
div\{ |u|^{\lambda -1} u \psi (F) log(1+|F|) \phi_{k}(|u|)\}\\
&= -(\lambda -1)|u|^{\lambda}F \psi (F) log(1+|F|)\phi_{k}(|u|)\\
&-|u|^{\lambda +1}F\psi (F) log(1+|F|)\chi_{\{C_{k}\leqslant |u| \leqslant C_{k} +1\}}\\
&+ |u|^{\lambda -1} \frac{d\psi }{dt}(F) (u\cdot \nabla F)log(1+|F|)\phi_{k}(|u|)\\
& + |u|^{\lambda -1}\psi (F) \frac{u\cdot \nabla |F|}{1+|F|} \phi_{k}(|u|)  . 
\end{split}
\end{equation}

Since our weak solution $u$ on $(0,1]\times \mathbb{R}^3$ satisfies 
$\frac{|u\cdot \nabla F|}{|u|^{\gamma}}\leqslant A|F|$, it follows that

\begin{itemize}
\item $|u\cdot \nabla F|(t,x) \leqslant \frac{A}{R}|u(t,x)|^{\gamma}$, if it happens that $(t,x)$ satisfies $|F(t,x)|\leqslant \frac{1}{R}$.
\item $|\frac{u\cdot \nabla |F|}{1+ |F|}| \leqslant 
\frac{|u\cdot \nabla |F||}{|F|}= \frac{|u\cdot \nabla F|}{|F|} \leqslant
A |u|^{\gamma}$ . 
\end{itemize}

So, it follows from inequality (\ref{13}) that

\begin{equation}\label{14}
\begin{split}
\Lambda_{1} + \Lambda_{2}\\
&\leqslant \int_{Q_{k-1}}|u|^{\lambda -1}|\frac{d\psi }{dt}(F)|\cdot 
|u\cdot \nabla F|log(1+|F|)\phi_{k}(|u|)\\
&+ \int_{Q_{k-1}}|u|^{\lambda -1}|\psi (F)|\cdot |\frac{u\cdot \nabla |F|}{1+|F|}|
\phi_{k} (|u|)\\
&\leqslant \int_{Q_{k-1}}|u|^{\lambda -1}(2R)(\frac{A}{R} |u|^{\gamma})
log(1+\frac{1}{R})\phi_{k}(|u|)\\
&+\int_{Q_{k-1}}|u|^{\lambda -1}\cdot A\cdot |u|^{\gamma}\phi_{k}(|u|)\\
&\leqslant A(1+2log2)\int_{Q_{k-1}}|u|^{\lambda -1 + \gamma} \phi_{k}(|u|)\\
&\leqslant A(1+2log2)\int_{Q_{k-1}}|u|^{\lambda -1 +\gamma} \chi_{\{v_{k}\geqslant 0 \}} ,
\end{split}
\end{equation}

in which the terms $\Lambda_{1}$, and $\Lambda_{2}$ are given by

\begin{itemize}
\item $\Lambda_{1} = (\lambda -1)\int_{Q_{k-1}} |u|^{\lambda}F\psi (F)\cdot log(1+|F|)
\phi_{k}(|u|)$.
\item $\Lambda_{2} = \int_{Q_{k-1}} |u|^{\lambda +1}(\psi (F)F)\cdot log(1+|F|)
\chi_{\{C_{k}\leqslant |u| \leqslant C_{k} +1 \}}   $ .
\end{itemize}

We then notice that

\begin{itemize}
\item Since $\lambda > 2$, we have $\Lambda_{1} \geqslant
\int_{Q_{k-1}}|u|^{\lambda }(F\psi (F))log(1+|F|)\chi_{\{ |u|\geqslant C_{k} +1 \}}   $.
\item $\Lambda_{2} \geqslant \frac{R}{2}\int_{Q_{k-1}}|u|^{\lambda}F\psi(F)log(1+|F|)
\chi_{\{C_{k} \leqslant |u| \leqslant C_{k} + 1 \}} $, for every $k\geqslant 1$. Notice that this is true because $C_{k}= R(1-\frac{1}{2^k})$, and that $(1-\frac{1}{2^{k}}) \geqslant \frac{1}{2}$, for every $k\geqslant 1$.
\end{itemize}

Hence,  it follows from inequality (\ref{14}) that

\begin{equation}\label{15}
\begin{split}
\int_{Q_{k-1}}|u|^{\lambda }F\psi (F)log(1+|F|)\chi_{\{v_{k}\geqslant 0\}}\\
&= \int_{Q_{k-1}}|u|^{\lambda }F\psi (F)log(1+|F|)\chi_{\{C_{k}\leqslant |u|
\leqslant C_{k} +1 \}}\\
&+ \int_{Q_{k-1}}|u|^{\lambda }F\psi (F)log(1+|F|)\chi_{\{|u|> C_{k} +1\}}\\
&\leqslant \frac{2}{R}\Lambda_{2} + \Lambda_{1}\\
&\leqslant 3C\cdot A\int_{Q_{k-1}}|u|^{\lambda -1 +\gamma }\chi_{\{v_{k} \geqslant 0\}}. 
\end{split}
\end{equation}

As a matter of fact, inequality (\ref{15}) leads us to raise up the index for the term $\int_{Q_{k-1}} |u|^{\theta} \chi_{\{v_{k}\geqslant 0\}}$, for any $\theta$ with $ 0  < \theta < \frac{10}{3}$, in the following way

\begin{equation*}
\begin{split}
\int_{Q_{k-1}}|u|^{\theta }\chi_{\{v_{k}\geqslant 0 \}}\\
& = \int_{Q_{k-1}} \{R(1-\frac{1}{2^{k}}) + v_{k} \}^{\theta } \chi_{\{v_{k}\geqslant 0 \}}\\
&\leqslant C_{\theta} \{R^{\theta }\int_{Q_{k-1}}\chi_{\{v_{k}\geqslant 0 \}} +
\int_{Q_{k-1}} v_{k}^{\theta }\chi_{\{v_{k} \geqslant 0 \}} \}\\
&\leqslant \frac{C_{\theta }}{R^{\frac{10}{3}-\theta }} \{2^{\frac{10k}{3}}
+ 2^{(\frac{10}{3} - \theta  )k}\}\int_{Q_{k-1}}v_{k-1}^{\frac{10}{3}}\\
&\leqslant \frac{C_{\theta }}{R^{\frac{10}{3}-\theta }}2^{\frac{10k}{3}}U_{k-1}^{\frac{5}{3}} .  
\end{split}
\end{equation*}

for every $\theta$ with $0< \theta < \frac{10}{3}$, where $C_{\theta }$ is some positive constant depending only on $\theta$. Hence it follows from inequalities(\ref{12}), (\ref{15}), and our last inequality that

\begin{equation}\label{16}
\begin{split}
\int_{Q_{k-1}} |u|^{2\beta } |F|\cdot log(1+|F|)\chi_{\{v_{k}\geqslant 0  \}}\\
&\leqslant \frac{log2}{R}\int_{Q_{k-1}}|u|^{2\beta }\chi_{\{v_{k}\geqslant 0\}}\\
&+ \int_{Q_{k-1}} |u|^{2\beta }|F|log(1+|F|)\chi_{\{|F|>\frac{1}{R}\}}
\chi_{\{v_{k}\geqslant 0 \}}\\
&\leqslant \frac{log2}{R} \frac{C_{2\beta }2^{\frac{10k}{3}}}{R^{\frac{10}{3}-2\beta }}U_{k-1}^{\frac{5}{3}}\\
&+ 3C\cdot A \int_{Q_{k-1}}|u|^{2\beta -1 +\gamma }\chi_{\{v_{k}\geqslant 0\}}\\
&\leqslant C_{\beta , \gamma }(1+A)\cdot 2^{\frac{10k}{3}}U_{k-1}^{\frac{5}{3}}
\{ \frac{1}{R^{\frac{10}{3}-2\beta +1}}\\
& + \frac{1}{R^{\frac{10}{3}-2\beta +1-\gamma    }}  \} ,
\end{split}
\end{equation}

in which $\beta > \frac{3}{2}$, and that $\beta$ is sufficiently close to $\frac{3}{2}$, and $C_{\beta , \gamma }$ is some constant depending only on $\beta$, and$\gamma$.

Before we can finish our job, we also need to deal with the term 
$(\int_{Q_{k-1}}|u|^{2p\beta }|F|\chi_{\{v_{k}\geqslant 0 \}} )^{\frac{1}{p}}$,
and the term $\int_{Q_{k-1}}|u|^{\beta }|F|\chi_{\{v_{k}\geqslant 0\}}$,
 which appear in inequality (\ref{11}). For this purpose, we will consider $\lambda$ again to be $\frac{3}{2} < \lambda < \frac{10}{3}+1- \gamma $, and let us carry out the following computation, in which $\psi$ and $\phi_{k}$ etc are just the same as before. 

\begin{equation*}
\begin{split}
div \{|u|^{\lambda -1 }u\psi (F) \phi_{k}(|u|)\}\\
&= -(\lambda -1)|u|^{\lambda }F\psi (F)\phi_{k}(|u|)\\
& + |u|^{\lambda -1}\frac{d\psi }{dt}(F) (u\cdot \nabla F )\phi_{k}(|u|)\\
&- |u|^{\lambda +1}F\psi (F)\chi_{\{C_{k}\leqslant |u| \leqslant C_{k} +1   \}} .
\end{split}
\end{equation*}

Hence, we have

\begin{equation*}
\begin{split}
(\lambda -1)\int_{Q_{k-1}}|u|^{\lambda}F\psi (F)\phi_{k} (|u|)\\
& +\int_{Q_{k-1}}|u|^{\lambda +1}F\psi (F)\chi_{\{C_{k} \leqslant |u| \leqslant C_{k} +1 \}}\\
&\leqslant \int_{Q_{k-1}} |u|^{\lambda -1}|\frac{d\psi }{dt}(F)|\cdot 
|u\cdot \nabla F|\phi_{k}(|u|)\\
&\leqslant \int_{Q_{k-1}} |u|^{\lambda -1} (2R)(\frac{A}{R}|u|^{\gamma })
\chi_{\{v_{k}\geqslant 0\}}\\
&\leqslant 2A\int_{Q_{k-1}} |u|^{\lambda -1+ \gamma }\chi_{\{v_{k}\geqslant 0 \}} .
\end{split}
\end{equation*}

So, it follows that

\begin{equation*}
\begin{split}
\int_{Q_{k-1}}|u|^{\lambda } F\psi (F) \chi_{\{v_{k}\geqslant 0 \}}\\
& = \int_{Q_{k-1}} |u|^{\lambda }F\psi (F)\chi_{\{C_{k}\leqslant |u| \leqslant C_{k} + 1   \}}\\
&+ \int_{Q_{k-1}}|u|^{\lambda } F\psi (F)\chi_{\{|u|> C_{k} + 1  \}}\\
&\leqslant \frac{2}{R}\int_{Q_{k-1}}|u|^{\lambda + 1} F\psi (F)
\chi_{\{C_{k} \leqslant |u| \leqslant C_{k} + 1 \}}\\
& + \int_{Q_{k-1}}|u|^{\lambda }F\psi (F) \phi_{k}(|u|)\\
&\leqslant 3 \{\int_{Q_{k-1}}|u|^{\lambda +1}F\psi (F)\chi_{\{C_{k}\leqslant |u| \leqslant C_{k} +1 \}}\\    
& +(\lambda -1)\int_{Q_{k-1}}|u|^{\lambda }F\psi(F)\phi_{k}(|u|)\}\\
&\leqslant 6A \int_{Q_{k-1}}|u|^{\lambda -1 +\gamma }\chi_{\{v_{k}\geqslant 0 \}},
\end{split}
\end{equation*}

in which $\lambda$ satisfies $\frac{3}{2} < \lambda < \frac{10}{3} + 1 - \gamma $. Now, put $\lambda = 2p\beta $, with $\beta > \frac{3}{2}$ to be sufficiently close to 
$\frac{3}{2}$, and $1<p<\frac{5}{4}$ to be sufficiently close to $1$, it follows from our last inequality that

\begin{equation}\label{17}
\begin{split}
\int_{Q_{k-1}}|u|^{2p\beta }|F|\chi_{\{v_{k}\geqslant 0 \}}\\
& = \int_{Q_{k-1}}|u|^{2p\beta }|F|\chi_{\{|F|\leqslant \frac{1}{R}\}}
\chi_{\{v_{k}\geqslant 0\}}\\
&  +\int_{Q_{k-1}} |u|^{2p\beta }\chi_{\{|F|> \frac{1}{R}\}}
\chi_{\{v_{k}\geqslant 0  \}}|F|\\
&\leqslant \frac{1}{R}\int_{Q_{k-1}}|u|^{2p\beta }\chi_{\{v_{k}\geqslant 0\}}\\
&+ 6A\int_{Q_{k-1}}|u|^{2p\beta -1 + \gamma }\chi_{\{v_{k}\geqslant 0 \}}\\
&\leqslant C(1+A)\{\frac{1}{R^{\frac{10}{3}-2p\beta +1}} 
+\frac{1}{R^{\frac{10}{3}-2p\beta +1 -\gamma }} \}2^{\frac{10k}{3}}U_{k-1}^{\frac{5}{3}} .
\end{split}
\end{equation}

In exactly the same way, by setting $\lambda$ to be $\beta $, with 
$\beta > \frac{3}{2}$ to be sufficiently close to $\frac{3}{2}$, it also follows that

\begin{equation}\label{18}
\begin{split}
\int_{Q_{k-1}}|u|^{\beta } |F|\chi_{\{v_{k}\geqslant 0 \}}\\
&= \int_{Q_{k-1}}|u|^{\beta }|F|\chi_{\{|F|\leqslant \frac{1}{R}\}}\chi_{\{v_{k}\geqslant 0\}}\\
&+\int_{Q_{k-1}}|u|^{\beta }|F|\chi_{\{|F|>\frac{1}{R}\}}\chi_{\{v_{k}\geqslant 0 \}}\\
&\leqslant \frac{1}{R}\int_{Q_{k-1}}|u|^{\beta }\chi_{\{v_{k}\geqslant 0 \}}
+6A\int_{Q_{k-1}}|u|^{\beta -1 +\gamma }\chi_{\{v_{k}\geqslant 0 \}}\\
&\leqslant C_{\beta ,\gamma }(1+ A) \{\frac{1}{R^{\frac{10}{3}-\beta +1}} 
+ \frac{1}{R^{\frac{10}{3}-\beta +1-\gamma }}\} 2^{\frac{10k}{3}}U_{k-1}^{\frac{5}{3}} .
\end{split}
\end{equation}

By combining inequalities (\ref{11}), (\ref{16}), and (\ref{17}),and (\ref{18}) we now conclude that

\begin{equation}\label{19}
\begin{split}
\int_{Q_{k-1}}|\int_{Q_{k-1}} \nabla (\frac{v_{k}}{|u|})uP_{k1}dx|ds\\
&\leqslant (1+A)(1+\frac{1}{\alpha })C_{p, \beta }(1+K^{1-\frac{1}{p}})\times \\ &(1+\|u\|_{L^{\infty}(0,1;L^{2}(\mathbb{R}^{3}))})\times \\
&\{  (\frac{1}{R^{\frac{10}{3}-2p\beta +1-\gamma - p}})^{\frac{1}{p}} 2^{\frac{10k}{3p}} U_{k-1}^{\frac{5}{3p}}   
 + \frac{1}{R^{\frac{10}{3}-2\beta -\gamma }} 2^{\frac{10k}{3}}U_{k-1}^{\frac{5}{3}}
 \} .
\end{split}
\end{equation}

Notice that if $p\rightarrow 1^{+}$, and $\beta \rightarrow \frac{3}{2}^{+}$, then,we have $(\frac{10}{3}-2p\beta +1 -p -\gamma)\rightarrow (\frac{1}{3}-\gamma ) >0$, and that $(\frac{10}{3}-2\beta -\gamma )\rightarrow (\frac{1}{3} - \gamma )> 0$.\\

So, finally, we recognize that by combining inequalities (\ref{middle}), (\ref{easy}), and (\ref{19}), we conclude that we are done in proving proposition~\ref{index} .


\begin{thebibliography}{9}


\bibitem{Beale} J. T. Beale, T. kato, and A. Majda. Remarks on the breakdown of smooth solutions for the 3-D Euler equations. \emph{Comm. Math. Phys.}, 94(1):61-66, 1984.

\bibitem{Veiga} H. Beirao da Veiga. A new regularity class for the Navier-Stokes equations in $\mathbb{R}^{n}$. \emph{Chinese Ann. Math. Ser. B}, 16(4):407-412, 1995. A Chinese summary appears in Chinese Ann. Math. Ser. A 16 (1995), no. 6, 797.


\bibitem{Caff}L. Caffarelli, R. Kohn, and L. Nirenberg. Partial regularity of suitable weak solutions of
the Navier-Stokes equations. \emph{Comm. Pure Appl. Math.}, 35(6):771-831, 1982.


\bibitem{Chan} Chi Hin Chan and Alexis Vasseur. Log improvement of the Prodi-Serrin criteria for Navier-Stokes equation.


\bibitem{Constantin} Peter Constantin and Charles Fefferman. Direction of vorticity and the problem of global regularity for the Navier-Stokes equations. 
\emph{Indiana Univ. Math. J.}, 42(3):775-789, 1993.




\bibitem{De}Ennio De Giorgi. Sulla differenziabilita e l'analiticita delle estremali degli
integrali multipli regolari. \emph{Mem. Accad. Sci. Torino. Cl. Sci. Fis. Mat. Nat. (3)},3:25-43,1957.


\bibitem{Esca}L. Escauriaza, G. Seregin, and V. Sverak. $L_{3, \infty}$-solutions of the Navier-Stokes
equations and backward uniqueness. \emph{Russian Math. Surveys}. 58:2 211-250, 2003.


\bibitem{Fabes}E. B. Fabes, B. F. Jones, and N. M. Riviere. The initial value problem for the
Navier-Stokes equations with data in $L^p$. \emph{Arch. Rational Mech. Anal.}, 45:222-240,1972.


\bibitem{Foias} C.Foias and R. Temam. Some analytic and geometric properties of the solutions of the evolution
Navier-Stokes equations. \emph{J. Math. Pures Appl.(9)}, 58(3):339-368, 1979.






\bibitem{Hopf}Eberhard Hopf. Uber die Anfangswertaufgabe fur die hydrodynamischen Grundgleichungen.
\emph{Math. Nachr.}, 4:213-231,1951.






\bibitem{3} L. Iskauriaza, G. A. Seregin, and V. Shverak. $L_{3,\infty}$-solutions of Navier-Stokes equations
and backward uniqueness. \emph{Uspekhi Mat. Nauk}, 58(2(350)):3-44, 2003.




\bibitem{John} F. John, L. Nirenberg. On functions of bounded mean oscillation. \emph{Comm. Pure Appl. Math.}, 14 1961 415-426. (Reviewer: L. C. Young)






\bibitem{Kato} Tosio Kato. Strong $L^P$-solutions of the Navier-Stokes equation in $\mathbb{R}^m$, with
applications to weak solutions. \emph{Math. Z.}, 187(4):471-480, 1984.



\bibitem{Kozono} H. Kozono, Y. Taniuchi, Bilinear estimates in $BMO$ and the Navier-Stokes equations, math. Z.,235(2000), 173-194. MR1785078(2001g:76011)





\bibitem{Leray}Jean Leray. Sur le mouvement d'un liquide visqueux emplissant l'espace. \emph{Acta.
Math.}, 63:183-248,1934.



\bibitem{Lin} Fanghua Lin. A new proof of the Caffarelli-Kohn-Nirenberg theorem. \emph{Comm. Pure Appl.
Math.}, 51(3):241-257, 1998.



\bibitem{Prodi} Giovanni prodi. Un teorema di unicita per le equazioni di Navier-Stokes. \emph{Ann. Mat. Pura Appl. (4)}, 48:173-182, 1959.




\bibitem{Sch1} Vladimir Scheffer. Partial regularity of solutions to the Navier-Stokes equations.
\emph{Pacific J. Math.}, 66(2):535-552, 1976.


\bibitem{Sch2} Vladimir Scheffer. Hausdorff measure and the Navier-Stokes equations. \emph{Comm. Math.
Phys.}, 55(2):97-112,1977.

\bibitem{Sch3} Valdimir Scheffer. The Navier-Stokes equations in space dimension four. \emph{Comm. Math.
Phys.}, 61(1):41-68,1978.


\bibitem{Sch4} Valdimir Scheffer. The Navier-Stokes equations on a bounded domain. \emph{Comm. Math.
Phys.}, 73(1):1-42, 1980.



\bibitem{Seregin}G. Seregin and V. Sverak. Navier-Stokes equations with lower bounds on the pressure.
\emph{Arch. Ration. Mech. Anal.}, 163(1):65-86, 2002.





\bibitem{Serrin}James Serrin. The initial value problem for the Navier-Stokes equations. In
\emph{Nonlinear Problems Proc. Sympos., Madison, Wis.}, pages 69-98. Univ. of Wisconsin Press, Madison,
Wis., 1963.


\bibitem{Struwe} Michael Struwe. On partial regularity results for the Navier-Stokes equations.
\emph{Comm. Pure Appl. Math.}, 41(4):437-458, 1988.



\bibitem{Vasseur}A. Vasseur. A new proof of partial regularity of solutions to Navier-Stokes equations

\bibitem{Vasseur2} A. Vasseur. Regularity criterion for 3D Navier-Stokes equations in terms of the direction of the velocity.  




\end{thebibliography}
\end{document}